\newtheorem{Proposition}{Proposition}
\newtheorem{Lemma}{Lemma}
\newtheorem{Theorem}{Theorem}
\newtheorem*{Theorem*}{Theorem}
\newtheorem{Corollary}{Corollary}
\newtheorem{defn}{Definition}
\newcommand{\proj}{\mathbb{P}}
\newcommand{\Z}{\mathbb{Z}}
\newcommand{\com}{\mathbb{C}}
\newcommand{\Q}{\mathbb{Q}}
\newcommand{\MM}{\mathsf{M}}
\newcommand{\bx}{\mathbf {x}}
\newcommand{\by}{\mathbf {y}}
\newcommand{\bz}{\mathbf {z}}
\newcommand{\bw}{\mathbf {w}}
\newcommand{\bmu}{{\boldsymbol{\mu}}}
\newcommand{\bnu}{{\boldsymbol{\nu}}}
\newcommand{\balpha}{{\boldsymbol{\alpha}}}
\newcommand{\uuu}{{\mathfrak{1}}}
\newcommand{\ppp}{{\mathfrak{p}}}
\newcommand{\lv}{\left |}
\newcommand{\lang}{\left\langle}
\newcommand{\rang}{\right\rangle}
\newcommand{\combinatfactor}{\mathfrak{z}}
\newcommand{\zz}{{\combinatfactor}}
\newcommand{\cF}{\mathcal{F}}
\newcommand{\vac}{v_\emptyset}
\DeclareMathOperator{\Aut}{Aut}
\def\rootQ{\sqrt{Q}}
\def\rootQ2{\sqrt{Q_2}}
\def\sl2{\mathfrak{sl}_2}
\def\H{\mathcal{H}}
\begin{document}

\title{A Fock Space approach to Severi Degrees of Hirzebruch Surfaces}
\author{Y. Cooper}
\date{August 2017}
\maketitle

\begin{abstract}
The classical Severi degree counts the number of algebraic curves of fixed genus and class passing through some general points in a surface.  In this paper we study Severi degrees as well as several types of Gromov--Witten invariants of the Hirzebruch surfaces $F_k$, and the relationship between these numbers.  To each Hirzebruch surface $F_k$ we associate an operator $\mathsf{M}_{F_k} \in \H[\proj^1]$ acting on the Fock space $\cF[\proj^1]$.  Generating functions for each of the curve-counting theories we study here on $F_k$ can be expressed in terms of the exponential of the single operator $\mathsf{M}_{F_k}$, and counts on $\proj^2$ can be expressed in terms of the exponential of $\mathsf{M}_{F_1}$.  Several previous results can be recovered in this framework, including the recursion of Caporaso and Harris for enumerative curve counting on $\proj^2$, the generalization by Vakil to $F_k$, and the relationship of Abramovich--Bertram between the enumerative curve counts on $F_0$ and $F_2$.   We prove an analog of Abramovich-Bertram for $F_1$ and $F_3$.  We also obtain two differential equations satisfied by generating functions of relative Gromov--Witten invariants on $F_k$.  One of these recovers the differential equation of Getzler and Vakil.

\end{abstract}

\section{Introduction}
Two important pieces of work that contributed to the modern renaissance of enumerative algebraic geometry were the papers of Kontsevich and Caporaso--Harris, in which they gave complete formulas for counting curves in $\proj^2$, first in genus 0 \cite{KM}, and subsequently in all genera \cite{CH}.  The counts of genus 0 curves on $\proj^2$ appear as structure coefficients in the quantum cohomology ring of $\proj^2$, which has a rich algebraic structure.  However as Caporaso and Harris remark at the beginning of their seminal paper:

\begin{quotation}
One aspect of the work of Kontsevich and Ruan--Tian is that they relate these numbers to the coefficients in the structure equations of an algebraic object, the quantum cohomology ring. It would be very interesting to see if any similar interpretation could be placed on the degrees of Severi varieties in positive genus. We don't at present know of any algebraic structure that generates these numbers. \cite{CH}
\end{quotation}

\noindent In this paper we describe an algebraic structure that generates the curve counts in all genera for several different curve counting theories on the surfaces $\proj^2$ and the Hirzebruch surfaces $F_k$.  It is not a generalization of the quantum cohomology ring.

\subsection{Acknowledgements}
I would like to thank R. Pandharipande, J. Harris, A. Patel, A. Pixton, P. Etingof, D. Ranganathan, Q. Chen, and R. Vakil for many conversations and ideas.

\subsection{Background and notation}
\subsubsection{Severi theory}

This paper builds on the results of \cite{CP}.  We refer to that paper for a more detailed description of the set up.

Let $S$ be a nonsingular projective surface.  The moduli space of stable maps
$$\overline{M}_{g,n}^\bullet(S,\beta)$$
from genus $g$, $n$-pointed curves to $S$ representing the class $\beta\in H_2(S,\mathbb{Z})$ has virtual dimension
$$\text{dim}_{\com}\ [\overline{M}^\bullet_{g,n}(S,\beta)]^{vir} \ =\ g-1 +n+ \int_\beta c_1(S).$$
The superscript $\bullet$ indicates the domain is possibly disconnected, but with no connected components collapsed to points of $S$.
Let
$$\text{ev}_i: \overline{M}^\bullet_{g,n}(S,\beta) \rightarrow S$$
be the evaluation at the $i^{th}$ marked point.
We refer the reader to \cite{FP,KM} for an introduction to stable maps and Gromov--Witten theory.

A Gromov--Witten analogue of the Severi degree is defined by the following
construction. Let
$$n= \int_\beta c_1(S)+g-1$$
be the virtual dimension of the unpointed space $\overline{M}^\bullet_{g}(S,\beta)$.
Let
$$N^{\bullet}_{g,\beta} = \int_{[\overline{M}^\bullet_{g,n}(S,\beta)]^{vir}} \prod_{i=1}^{n} \text{ev}_i^*(\ppp),$$
where $\ppp\in H^4(S,\mathbb{Z})$ is the point class.  If $n<0$, then $N_{g,\beta}^\bullet$ vanishes by definition.

For an arbitrary surface $S$, the Gromov--Witten invariant $N^{\bullet}_{g,\beta}$ may be completely unrelated to the classical Severi degree. Indeed, for Enriques surfaces, the Gromov--Witten invariants are often fractional, and, for $K3$ surfaces, the Gromov--Witten invariants vanish altogether. For the surfaces $\proj^2$ and $\proj^1\times \proj^1$, $N^{\bullet}_{g,\beta}$ coincides with the (disconnected) classical Severi degree \cite{CP}.  For the Hirzebruch surfaces $F_k$, when $k>0$ the numbers $N^{\bullet}_{g,\beta}$ are not enumerative, and in Section (\ref{AbramBert}) we discuss the relationship between $N^{\bullet}_{g,\beta}$ and enumerative curve counts on $F_k$.  There are several additional ways to define curve counting theories on $F_k$ and in this paper we discuss some of them.

The first we consider is relative Gromov--Witten invariants, as defined by Li in \cite{L}.  Consider a surface $S$ and a smooth divisor (possibly disconnected) $D$ on it.  As in Gromov--Witten theory, one can consider maps of genus $g$ curves to $S$ whose image is the homology class $\beta$.  However now we also fix a cohomology weighted partition $\eta$ which specifies how the curve meets $D$.

In this paper, we will have $S = F_k$ and $D = C \bigcup E$.  In this case the relative divisor $D$ is disconnected, and we will use the following notation for clarity.  Let $\mu_C [\uuu] +  \nu_C[\ppp]$  denote the relative condition on $C$ and $\mu_E[\uuu] + \nu_E[\ppp]$ the relative condition on $E$.  

Let
$$
\overline{M}^\bullet_{g,n+m}(\mu_C, \nu_C, \mu_E, \nu_E)(S,\beta)
$$
denote the moduli space of relative stable maps of genus $g$, $n+m$-pointed curves mapping to $S$ in the class $\beta$ and meeting $C$ (resp. $E$) with multiplicity $\mu_C [\uuu]+\nu_C[\ppp]$ along $C$ and $\mu_E[\uuu]+\nu_E[\ppp]$ along $E$.  The relative conditions are {\it labeled}.

As before, $\int_\beta c_1(S)+g-1$ is the virtual dimension of the unpointed space $\overline{M}^\bullet_{g}(S,\beta)$, so we impose $$n = \int_\beta c_1(S)+g-1 - (|\mu_C+\mu_E| -\ell(\mu_C+\mu_E) + |\nu_C+\nu_E|)$$ general point conditions on the map.  We also have $m=\ell(\mu+\nu)$ marked points which map to the relative conditions.

We define the relative Gromov--Witten invariants of $F_k$ relative to $C$ and $E$ as

\begin{align*}
N^{\bullet}_{g,\beta} & (\mu_C, \nu_C, \mu_E, \nu_E)  \\ 
& = \frac{1}{|Aut(\mu_C)||Aut(\nu_C)||Aut(\mu_E)||Aut(\nu_E)|}\int_{[\overline{M}^\bullet_{g,n}(\mu_C, \nu_C, \mu_E, \nu_E)(S,\beta)]^{vir}} \prod_{i=1}^{n} \text{ev}_i^*(\ppp).
\end{align*}

A special case of relative Gromov--Witten invariants that will be interesting for us is the case of the most general conditions possible are placed on the relative divisor.  Namely $\mu_C$ and $\mu_E$ are partitions all of whose parts are 1, and $\nu_C$ and $\nu_E$ are empty.  We will call these transverse Gromov--Witten invariants, and will denote them as follows.

\begin{align*}
\hat{N}^{\bullet}_{g,\beta} = \frac{1}{|Aut(\mu_C)||Aut(\mu_E)|}\int_{[\overline{M}^\bullet_{g,n}((1,...,1), \emptyset, (1,...,1), \emptyset)(S,\beta)]^{vir}} \prod_{i=1}^{n} \text{ev}_i^*(\ppp).
\end{align*}

Lastly, one can consider enumerative curve counts on $F_k$, as defined in \cite{V}.  In the remainder of this paper we will discuss formulas for all of the curve counting theories discussed in this introduction and study the relationships between them.

\subsubsection{Fock Space}
To start, we write the cohomology of $\proj^1$ as  the standard direct sum
$$H^*(\proj^1,\Q) = \Q \cdot \uuu \ \oplus\ \Q \cdot \ppp\ $$
where $\uuu$ and $\ppp$ are the unit and point classes respectively.

The Lie algebra $\H [ \proj^1]$ is generated by the operators $1,$ $\{ \alpha_k [\uuu]\},$ and $\{ \alpha_k [\ppp]\}$, where $k \in \Z \setminus \{0\}$. The Lie bracket is given by
\begin{eqnarray} \label{Hp1commutation}
\left[\alpha_k[\uuu],\alpha_l[\ppp]\right] &= &k \, \delta_{k+l,0}\,
\end{eqnarray}
with all other commutators vanishing.

The Fock space $\cF[\proj^1]$ is freely generated over $\Q$ by the elements
$$\alpha_{-k_1}[\uuu]...\alpha_{-k_n}[\uuu] \alpha_{-\ell_1} [\ppp] ... \alpha_{-\ell_m} [\ppp] \vac, \ \ k_i, \ell_j \in\Z_{>0},$$
where by the commutation relations, the order of $\alpha_{-k_1}[\uuu]...\alpha_{-k_n}[\uuu] \alpha_{-\ell_1} [\ppp] ... \alpha_{-\ell_m} [\ppp]$ does not matter.

$\H [\proj^1]$ acts on $\cF[\proj^1]$ in a way analogous to the action of $\H$ on $\cF$.  The creation operators
$$\alpha_{-k} [\uuu] \ \mathrm{and} \, \alpha_{-k} [\ppp], k \in \Z_{>0},$$
act via
$$ \alpha_{-k} [\uuu] \left(\prod_{i,j} \alpha_{-k_i} [\uuu] \alpha_{-\ell_j} [\ppp] \vac \right) = \alpha_{-k} [\uuu] \prod_{i,j} \alpha_{-k_i} [\uuu] \alpha_{-\ell_j} [\ppp] \vac,$$
and similarly for $\alpha_{-\ell} [\ppp].$
The annihilation operators
$$\alpha_{k} [\uuu] \ \mathrm{and } \, \alpha_{k} [\ppp],\ \  k\in\Z_{>0},$$ kill the vacuum
$$
\alpha_k [\uuu] ( \vac )=0 \ \mathrm{and} \, \alpha_k [\ppp] ( \vac )=0, \quad k>0 \,,
$$
and their action on any element $\prod_{i,j} \alpha_{-k_i} [\uuu] \alpha_{-\ell_j} [\ppp] \vac$ is determined by the commutation relations (\ref{Hp1commutation}).

A natural basis of $\cF[\proj^1]$ is given by the vectors
\begin{equation*}
  \label{basis}
  \lv \mu,\nu \rang = \frac{1}{\zz(\mu)\zz(\nu)}
 \, \prod_{i=1}^{\ell (\mu )} \alpha_{-\mu_i}[\uuu]
\prod_{j=1}^{\ell (\nu )} \alpha_{-\nu_j}[\ppp]
 \, \vac \,
\end{equation*}
indexed by all pairs of partitions $\mu$ and $\nu$ (of possibly different sizes), where $\zz(\mu)$ denotes the combinatorial factor
$$\zz(\mu)= |\Aut(\mu)|\cdot \prod_{i=1}^{\ell(\mu)} \mu_i \ \ .$$
An inner product is defined by
\begin{equation*}
  \label{inner_prod}
  \lang \mu,\nu | \mu',\nu' \rang =
\frac{u^{-\ell(\mu)}}{\zz(\mu)}
\frac{u^{-\ell(\nu)}}{\zz(\nu)} \delta_{\mu\nu'}\delta_{\nu\mu'}
\,.
\end{equation*}

We define a new operator $\MM_{F_k}$ on the Fock space $\cF[\proj^1]$ by the following formula,
$$\MM_{F_k}(u,Q_1, Q_2) = \sum_{i>0} \alpha_{-i}[\ppp] \alpha_i[\ppp]
+ Q_1^k Q_2  \sum_{|\mu|-k =|\nu| \geq 0} u^{({\ell(\mu)}-1)} \alpha_{-\mu}[\uuu]\alpha_\nu[\uuu]\ \ .$$
The second sum is over all pairs of partitions $\mu$ and $\nu$ whose size differ by $k$, and
\begin{eqnarray*}
\alpha_{-\mu}[\uuu]&=& \frac{1}{|\Aut(\mu)|}
\prod_{i=1}^{\ell(\mu)} \alpha_{-\mu_i}[\uuu]\ , \\
\alpha_{\nu}[\uuu]&=& \frac{1}{|\Aut(\nu)|}
\prod_{i=1}^{\ell(\nu)} \alpha_{\nu_i}[\uuu]\ . \\
\end{eqnarray*}
The variable $u$ encodes the genus, and the variables $Q_1$ and $Q_2$ encode the curve class.

\subsection{Statement of results}

\subsubsection{Gromov--Witten invariants of $F_k$}
The Picard group of $F_k$ is generated by the fiber class $F$ and the class of the exceptional divisor $E$.  The intersection products are 
$$F \cdot F = 0$$

$$F \cdot E = 1$$

$$E \cdot E = -k.$$

Let the variables $Q_1$ and $Q_2$ correspond to the curve classes $F$ and $E$ respectively.  

The partition function for the Gromov--Witten invariants of $F_k$ is

$$\mathsf{Z}^{F_k}(u, Q_1, Q_2, t) =  1+\sum_{g\in \mathbb{Z}} u^{g-1} \sum_{(d_1,d_2)} \  N_{g,(d_1,d_2)}^{\bullet} \frac{t^n}{n!} \ Q_1^{d_1} Q_2^{d_2}
$$
where the second sum is over all non-negative $d_i$ satisfying $(d_1,d_2)\neq (0,0)$ and $n = g-1+2d_1+(2-k)d_2$.

We define vectors $v$ and $w_k$ as follows.

\begin{equation}
v = exp(\alpha_{-1}[\uuu]) v_{\emptyset}
\end{equation}

and

\begin{equation}
w_k = exp \left( \sum_{\mu, \nu}  \left( \int_{ [ \overline{M}_h ( F_k(a,b) \backslash \mu,\nu)]^{vir}} \hspace{-0.8in} 1 \hspace{0.7in} \right) \ Q_1^s Q_2^t u^{h+\ell(\mu)+\ell(\nu) -1} \ \ \alpha_{-\mu}[\ppp] \alpha_{-\nu}[\uuu] \right) v_\emptyset,
\end{equation}
where $\mu,\nu$ are partitions and $\overline{M}_h ( F_k(a,b) \backslash \mu,\nu)$ is a moduli space of relative stable maps parameterizing stable maps of genus $h$ curves mapping to $F_k$ in the class $a F + b E$ relative the divisor $C$, with relative conditions given by $\mu [\uuu] + \nu [\ppp]$.  

In Section \ref{pfmain} we will prove that the following formula computes the generating function for Gromov--Witten invariants of $F_k$ using the degeneration formula of Li \cite{L}.
 
\begin{Theorem} \label{HirzebruchGW}
For every Hirzebruch surface $F_k$,
$$\mathsf{Z^{F_k}} (u, Q_1, Q_2,  t) = \big\langle\ v \ | \  \exp\big(t\mathsf{M}_{F_k}(u,Q_1, Q_2)\big) \ | \ w_k\ \big\rangle.$$
\end{Theorem}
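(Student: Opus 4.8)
The plan is to prove the formula by applying the degeneration formula of Li \cite{L}, following the template established for $\proj^2$ and $\proj^1\times\proj^1$ in \cite{CP}. Geometrically, I would build the matrix-element structure from a degeneration of $F_k$ into a chain of surfaces glued along rational curves isomorphic to $\proj^1$, obtained by specializing the $n$ general point conditions onto the section $C$ one at a time (equivalently, by iterated deformation to the normal cone of $C$). In this degeneration the relative conditions along each gluing curve $\cong \proj^1$ are indexed by pairs of partitions carrying cohomology weights in $H^*(\proj^1)=\Q\uuu\oplus\Q\ppp$, that is, by the basis vectors $|\mu,\nu\rangle$ of $\cF[\proj^1]$. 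Li's formula then rewrites $N^\bullet_{g,\beta}$ as a sum over intermediate relative conditions, which is exactly an iterated product of operators on the Fock space.

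Next I would organize the point insertions and the genus. Extracting the coefficient of $t^n/n!$ from $\exp\big(t\mathsf{M}_{F_k}\big)$ produces $\langle v|\mathsf{M}_{F_k}^{\,n}|w_k\rangle$, a sum over chains with exactly $n$ transfer factors, one for each point specialized onto $C$; the prefactor $1/n!$ matches the fact that the points are unordered. The passage from connected to the disconnected invariants $N^\bullet$ is precisely the exponential formula, which is why $\exp(t\mathsf{M}_{F_k})$, and the exponentials defining $v$ and $w_k$, all appear, and why the genus variable enters as $u^{g-1}$.

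The core of the argument is to identify the three ingredients of the right-hand side with local relative Gromov--Witten contributions. I would show: (i) the matrix element $\langle\mu',\nu'|\mathsf{M}_{F_k}|\mu,\nu\rangle$ equals the relative invariant of a single interior piece carrying one point insertion, with incoming condition $(\mu,\nu)$ and outgoing condition $(\mu',\nu')$, where the term $\sum_{i>0}\alpha_{-i}[\ppp]\alpha_i[\ppp]$ accounts for the contribution in which the point is absorbed into a $\ppp$-weighted marking transported across the piece, while $Q_1^kQ_2\sum_{|\mu|-k=|\nu|}u^{\ell(\mu)-1}\alpha_{-\mu}[\uuu]\alpha_\nu[\uuu]$ accounts for the contribution carrying the section class $C=kF+E$; (ii) the vector $v=\exp(\alpha_{-1}[\uuu])v_\emptyset$ is the relative cap at the absolute end, along which the curve meets the gluing $\proj^1$ transversally at unconstrained points; and (iii) the vector $w_k$ is, by its very definition, the disconnected relative Gromov--Witten generating series of $F_k$ relative to $C$. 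I would then verify that the gluing factors produced by Li's formula are reproduced exactly by the inner product $\langle\mu,\nu|\mu',\nu'\rangle=\frac{u^{-\ell(\mu)}}{\zz(\mu)}\frac{u^{-\ell(\nu)}}{\zz(\nu)}\delta_{\mu\nu'}\delta_{\nu\mu'}$ on $\cF[\proj^1]$; note in particular that its off-diagonal shape $\delta_{\mu\nu'}\delta_{\nu\mu'}$ encodes the Poincar\'e-dual pairing $\int_{\proj^1}\uuu\cup\ppp=1$ that glues $[\uuu]$-insertions to $[\ppp]$-insertions across a node, matching the conjugation of $\mu,\nu$ and $\uuu,\ppp$ visible in the definition of $w_k$.

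The main obstacle I expect is step (i): the precise local computation showing that a single transfer piece contributes exactly the two terms of $\mathsf{M}_{F_k}$, with the correct genus bookkeeping in the exponent of $u$ and the correct class bookkeeping in $Q_1,Q_2$. This requires evaluating the relevant local relative invariants, presumably by a further degeneration of the transfer piece into elementary building blocks together with known multiple-cover and fiberwise relative computations, and then checking that the Euler-characteristic additivity of the domain under each gluing matches the homogeneity in $u$ built simultaneously into $\mathsf{M}_{F_k}$, $v$, $w_k$, and the inner product. Making every combinatorial prefactor, namely the $\zz$, the automorphism factors $|\Aut(\cdot)|$, and the powers of $u$, line up with Li's gluing formula is where the real work lies; once these local identifications are in place, the global matrix-element identity follows formally from the degeneration formula.
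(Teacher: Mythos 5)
Your outline follows essentially the same route as the paper's proof: Li's degeneration formula applied to the iterated deformation to the normal cone of the co-directrix $C_k$ (class $E+kF$), producing a chain of $n+2$ copies of $F_k$ with one point condition on each middle component, so that extracting $t^n/n!$ yields $\langle v|\mathsf{M}_{F_k}^n|w_k\rangle$, with the matrix elements of $\mathsf{M}_{F_k}$ matching the one-pointed tube contributions, $v$ encoding the cap $S_{n+1}$, and $w_k$ being by definition the generating vector of relative invariants of the cap $S_0$. The one point of divergence is the step you flag as the main obstacle: the paper needs no further degeneration of the transfer piece and no nontrivial multiple-cover calculus there. It simply equates the virtual dimension of the moduli space of maps of a single connected component with the number of conditions imposed by the relative data, obtaining $2t+h+\ell(\sigma)+\ell(\sigma')=2$ for the component carrying the point and $=1$ for unpointed components; geometric constraints then leave exactly one unpointed configuration (a degree-$m$ rational cover of a fiber, totally ramified over both divisors, contributing $1/m$ from the automorphism group $\Z/m$) and exactly two pointed types --- Type A, a totally ramified fiber-class cover, and Type B, a rational curve in class $sF+E$ with $s\ge k$ --- matching the two summands of $\mathsf{M}_{F_k}$, each with $\int_{[M]}\mathrm{ev}_1^*(\ppp)=1$. (Bryan--Pandharipande-type multiple-cover input enters only in the evaluation of the cap vector $w_3$, which is separate from Theorem 1; also note that a Type B component carries class $sF+E$ for any $s\ge k$, not just the section class $kF+E$, the excess fiber degree being recorded by the constraint $|\mu|-k=|\nu|$.) So your structure, including the identification of the inner product's $\delta_{\mu\nu'}\delta_{\nu\mu'}$ with the Poincar\'e-dual gluing and the exponential with disconnected bookkeeping, coincides with the paper's argument, and the local computation you deferred is settled by an elementary dimension count rather than the heavier machinery you anticipated.
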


For $k \leq 3$, we can compute $w_k$ explicitly.  The case of $F_0$ was already studied in \cite{CP}, where $w_0$ was computed to be $Q_1 exp \left(\alpha_{-1}[\ppp] \right)$.  

\begin{Proposition}\label{ws}

For $F_1$,
$$w_1 = exp\left(  Q_1\alpha_{-1}[\ppp] + \frac{Q_2}{u} \right)v_{\emptyset}. $$
On $F_2$,
$$w_2 = exp\Bigl( Q_1 \alpha_{-1}[\uuu] + Q_1 Q_2 \alpha_{-1}[\uuu] \Bigr) v_{\emptyset}.$$
On $F_3$,
$$w_3 = exp\left( Q_1 \alpha_{-1}[\uuu] + \frac{1}{2} Q_1^2 Q_2 u \alpha_{-1}^2[\uuu] +  Q_1^2 Q_2 \alpha_{-2}[\uuu]  + \sum_{a > 0} \frac{(-1)^{a-1}}{a^2} Q_1^a Q_2^a \alpha_{-a}[\ppp] \right) v_{\emptyset}.$$

\end{Proposition}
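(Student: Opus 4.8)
The plan is to compute $w_k$ directly from its definition as a generating series of connected relative Gromov--Witten invariants of $F_k$ relative to the section $C$, with no interior point insertions. Since the integrand is the class $1$, only invariants supported on a virtually $0$-dimensional moduli space are nonzero, so the first step is to compute this virtual dimension and turn it into a numerical constraint. Writing $\beta = aF + bE$, letting $\mu$ denote the $\uuu$-weighted part and $\nu$ the $\ppp$-weighted part of the tangency profile along $C$ (so $|\mu|+|\nu| = \beta\cdot C = a$), and using $\int_\beta c_1(F_k) = -K_{F_k}\cdot\beta = 2a + (2-k)b$ together with the fact that each point-weighted tangency imposes one condition while the unit-weighted relative markings are free to move on $C$, the vanishing of the virtual dimension of $\overline{M}_h(F_k(a,b)\backslash \mu,\nu)$ becomes the single identity
$$ a + (2-k)b + h + \ell(\mu) = 1, $$
where $h$ is the genus. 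This constraint is the engine of the whole computation: every nonzero term of $w_k$ must satisfy it, and reading off the power of $u$ from the definition, $u^{h+\ell(\mu)+\ell(\nu)-1}$, then fixes the stated powers of $u$.

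For $k=1$ and $k=2$ I would enumerate the finitely many solutions. When $k=1$ the four non-negative integers sum to $1$, so exactly one equals $1$; the survivors are the class $F$ with a single point-weighted tangency (the unique fiber through a prescribed point of $C$, invariant $1$) and the rigid exceptional curve in class $E$, a $(-1)$-curve disjoint from $C$ (invariant $1$), producing respectively the $\alpha_{-1}$ term and the scalar $Q_2/u$ term. When $k=2$ the coefficient of $b$ drops out, so $b$ is unconstrained by the identity, but effectivity forces the invariant to vanish once $b$ is large; the only survivors are the classes $F$ and $F+E$, each meeting $C$ at one prescribed point with invariant $1$. In every case the invariant is a transparent count of a rigid rational curve, and I would translate it into the appropriate creation operator via the dictionary built into the definition of $w_k$ (a relative condition weighted by $\uuu$, resp.\ $\ppp$, contributes an operator weighted by its Poincar\'e dual) and assemble the exponent.

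For $k=3$ the identity becomes $a - b + h + \ell(\mu) = 1$, and the structure changes qualitatively. The low-degree solutions — class $F$, and class $2F+E$ with tangency profile $(2)$ or $(1,1)$ along $C$ — are finite in number and computed exactly as above, giving the first three terms of $w_3$. But because the coefficient of $b$ is now $-1$, taking $b=a$, $h=0$ and $\mu=(a)$ (a single maximal-order unit-weighted tangency) solves the constraint for \emph{every} $a\ge 1$, yielding an infinite family of genus-$0$ invariants in the classes $a(F+E)$, each totally tangent to $C$ at one unconstrained point; these are precisely the invariants that must be shown to equal $\tfrac{(-1)^{a-1}}{a^2}$. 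Alongside this I would record the vanishing of all remaining solutions of the constraint — the higher classes $aF+(a-1)E$, the mixed profiles on $a(F+E)$ with $\nu\neq\emptyset$, and the positive-genus contributions — which follow from effectivity ($E$ is a forced component once $\beta\cdot E<0$) together with dimension arguments.

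The crux of the proof, and the step I expect to be genuinely hard, is the evaluation of this infinite family. Since $(F+E)^2 = -1$ and $(F+E)\cdot C = 1$, the class $F+E$ is represented by a rigid rational curve meeting $C$ transversally once, and the invariants in class $a(F+E)$ with maximal tangency are pure multiple-cover contributions of degree-$a$ covers totally ramified over $C$. I would compute these by $\mathbb{C}^*$-localization on $F_3$ relative to $C$, using the fiberwise torus action whose fixed sections are $E$ (self-intersection $-3$) and $C$: the torus-fixed relative maps are glued from a degree-$a$ cover of $E$ and a degree-$a$ cover of a single fiber totally ramified over $C$, so their contributions reduce to Hodge integrals over $\overline{M}_{0,n}$ (equivalently, to the relative local curve $(\mathcal{O},\proj^1)$ building block). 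The factor $1/a^2$ arises from the ramification and automorphism weights of the degree-$a$ cover, and the alternating sign $(-1)^{a-1}$ from the Euler class of the obstruction bundle along the $(-3)$-curve $E$. The delicate point is extracting both the exact power $1/a^2$ and the uniform sign simultaneously for all $a$; as a consistency check I would note that $\sum_{a\ge 1}\tfrac{(-1)^{a-1}}{a^2}x^a = -\mathrm{Li}_2(-x)$ is a dilogarithm, the expected shape of a genus-$0$ multiple-cover series, and I could alternatively pin the answer down by degenerating $F_3$ and resumming against the known relative invariants of the building blocks.
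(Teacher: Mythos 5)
Your skeleton matches the paper's: the dimension constraint you derive is exactly the paper's equation $h+s+\ell(\sigma)+(2-k)t=1$, and your list of contributing configurations --- $F$ and $E$ for $F_1$; $F$ and $F+E$ for $F_2$; $F$, then $2F+E$ with profiles $(2)$ and $(1,1)$, then the infinite family $a(F+E)$ with full tangency for $F_3$ --- is the correct one. The first genuine gap is the mechanism you invoke to kill the remaining numerical solutions. On $F_2$ the constraint still allows, for every $t\ge 1$, genus-one maps in class $tE$ (and genus-zero maps in $F+tE$ for $t\ge 2$); these classes \emph{are} effective on $F_2$, and the relative moduli spaces are nonempty (multiple covers of the $(-2)$-curve $E$, which is disjoint from $C$, possibly attached to a fiber), so neither ``effectivity'' nor ``$E$ is a forced component once $\beta\cdot E<0$'' gives vanishing --- what is needed is a \emph{virtual} vanishing statement. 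The paper supplies it as Lemma~\ref{CapInequality}: by deformation invariance of relative Gromov--Witten theory, the problem on $F_{2m}$ (resp.\ $F_{2m+1}$) in class $aF+bE$ relative to $C$ is traded for $F_0$ (resp.\ $F_1$) in class $aF+b(E-mF)$ relative to $C+mF$, where non-effectivity of the class does force vanishing and yields the inequality $a\ge mb$. You use this inequality implicitly but never establish it, and no argument carried out on $F_k$ itself along the lines you describe can replace it.

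The second gap is the crux evaluation $\frac{(-1)^{a-1}}{a^2}$, which you leave as a plan, and the plan opens with a false premise: on $F_3$ there is no irreducible curve in class $F+E$ (an irreducible curve $aF+bE$ with $b\ge 1$ other than $E$ must satisfy $a\ge 3b$), so every effective divisor in $a(F+E)$ contains $E$ as a component, and ``multiple covers of a rigid $(-1)$-curve'' is not the geometry on $F_3$. It becomes the geometry only after the same deformation to $F_1$, where the class goes to $aE$ with $E$ the $(-1)$-curve and the tangency point becomes $E\cap(C+F)$; that is precisely the paper's route, which reduces the invariant to $\int c_{top}\bigl(H^1(f^*\mathcal{O}(-1))\bigr)$ over genus-zero degree-$a$ covers of $\proj^1$ fully ramified over a point, and then quotes the Bryan--Pandharipande computation $(-1)^{a-1}/a^2$ \cite{BP}. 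Your alternative --- fiberwise $\com^*$-localization directly on $F_3$ --- identifies plausible fixed loci, and amusingly the naive sign heuristic is consistent (the rank of $H^1(\mathcal{O}_{\proj^1}(-3a))$ is $3a-1$, and $(-1)^{3a-1}=(-1)^{a-1}$), but you neither evaluate the resulting Hodge-type integrals nor explain how $1/a^2$ emerges; doing so is itself a local-curve computation of Bryan--Pandharipande type. As written, the proposal correctly pins down the finite terms of $w_1$, $w_2$, $w_3$, but both the vanishing statements and the dilogarithm coefficients --- the two places where the deformation-equivalence idea is indispensable --- remain unproven.
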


The formula of Abramovich-Bertram \cite{AB} comparing the Gromov--Witten invariants on $F_2$ to enumerative curve counts on $F_2$ can be recovered by comparing $w_0$ and $w_2$.  This will be discussed in Section (\ref{AbramBert}).

\subsubsection{Other computations for $F_k$}

We can also compute generating functions of relative and transverse GW invariants, and find an interesting connection here to the enumerative geometry of $F_k$ which was studied by Block and Goettsche using tropical methods.

Now we consider relative Gromov--Witten invariants of $F_k$.  In order to assemble them in a generating function, we introduce $\bx, \by,\bz,\bw$.  We use the following notation.  If we write the partition $\mu$ as $\mu = 1^{m_1},2^{m_2},...$ then let $\bx^\bmu $ denote
$$
\bx^\bmu = x_1^{m_1} x_2^{m_2}...
$$

Let 
$$|\bmu_C, \bnu_C \rangle = |\mu_C, \nu_C \rangle \bx^{\bmu_C} \by^{\bnu_C}$$
and
$$|\bmu_E, \bnu_E \rangle = |\mu_E, \nu_E \rangle \bz^{\bmu_E} \bw^{\bnu_E}.$$

The partition function for the relative Gromov--Witten invariants of $F_k$ is

$$\mathsf{Z}^{F_k}(u, Q_1, Q_2, t, \bx,\by,\bz,\bw) =  1+\sum_{g\in \mathbb{Z}} u^{g-1} \sum_{(d_1,d_2)} \  N_{g,(d_1,d_2)}^{\bullet} (\mu_C, \nu_C, \mu_E, \nu_E) \frac{t^n}{n!} \ Q_1^{d_1} Q_2^{d_2} \bx^{\mu_C} \by^{\nu_C} \bz^{\mu_E} \bw^{\nu_E} 
$$
where the second sum is over all non-negative $d_i$ satisfying $(d_1,d_2)\neq (0,0)$ and $n = g-1+2d_1+(2-k)d_2 - (|\mu|+|\nu| - \ell(\nu))$.

\begin{Theorem} \label{relHirzebruchGW}
For every Hirzebruch surface $F_k$,
$$\mathsf{Z^{F_k}} (u, Q_1, Q_2,  t, \bx,\by,\bz,\bw) = \sum_{ \bmu_C, \bmu_E, \bnu_C, \bnu_E}\big\langle\ \bmu_C, \bnu_C | \  \exp\big(t\mathsf{M}_{F_k}(u,Q_1, Q_2)\big) \ | \ Q_1^{|\mu_E+\nu_E|} \bmu_E, \bnu_E\ \big\rangle.$$
\end{Theorem}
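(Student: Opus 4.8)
The plan is to re-run the degeneration argument behind Theorem~\ref{HirzebruchGW} one step short of completion, leaving the relative conditions along $C$ and $E$ prescribed rather than summing them into the boundary vectors. Recall that the proof of Theorem~\ref{HirzebruchGW} degenerates $F_k$ and applies Li's degeneration formula \cite{L}: the operator $\exp(t\MM_{F_k})$ assembles the pieces carrying the $n$ point conditions, and the two boundary contributions collapse to $v$ on the left and $w_k$ on the right. My first step is to locate, inside that computation, exactly where the relative data along the two sections enters, and to observe that $v$ is nothing but the transverse boundary state: since $\zz(1^n)=n!$, one has $v = \exp(\alpha_{-1}[\uuu])\vac = \sum_{n\ge 0}|1^n,\emptyset\rangle$, the sum over all ways the curve meets $C$ in simple unconstrained points. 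Thus $v$ is one particular boundary state, and the content of Theorem~\ref{relHirzebruchGW} is to allow an arbitrary one.

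Next I would repeat the degeneration verbatim, but now impose the fixed cohomology-weighted partitions $(\mu_C,\nu_C)$ along $C$ and $(\mu_E,\nu_E)$ along $E$. In Li's formula the relative insertions along a divisor meeting a boundary component are recorded by a choice of ramification profile; prescribing this profile replaces the free boundary sum by a single basis vector, so the left cap $v$ becomes $|\mu_C,\nu_C\rangle$ and the right cap $w_k$ becomes $Q_1^{|\mu_E+\nu_E|}|\mu_E,\nu_E\rangle$. The formal variables $\bx,\by,\bz,\bw$ merely bookkeep the four partitions, so summing over all of them produces the right-hand side of Theorem~\ref{relHirzebruchGW}. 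The prefactor $Q_1^{|\mu_E+\nu_E|}$ records the class contribution of the intersection with $E$: since $E\cdot\beta = d_1-kd_2 = |\mu_E|+|\nu_E|$, tracking the $Q_1$-degree through the matrix element shows that the $d_2$ uses of the second summand of $\MM_{F_k}$ contribute $Q_1^{kd_2}$ while the cap contributes $Q_1^{d_1-kd_2}$, summing to the required $Q_1^{d_1}$; likewise the $u$-powers in the inner product and in $\MM_{F_k}$ combine with the $\ell(\mu_C),\ell(\nu_C),\ell(\mu_E),\ell(\nu_E)$ of the relative conditions to give the correct genus exponent.

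Two bookkeeping verifications then finish the argument. First, the virtual dimension of the relative moduli space lowers $n$ by exactly the codimension of the relative conditions, which is what the exponent of $t$ in the expansion of $\exp(t\MM_{F_k})$ records once the fixed relative insertions have been placed. Second, the automorphism factors $|\Aut(\mu_C)|,\dots,|\Aut(\nu_E)|$ dividing $N^{\bullet}_{g,\beta}(\mu_C,\nu_C,\mu_E,\nu_E)$, together with the combinatorial factors $\zz(\cdot)$ built into the basis $|\mu,\nu\rangle$ and the $u^{-\ell}/\zz$ weights of the inner product, must reproduce the gluing multiplicities $\prod_i\mu_i$ produced when the degeneration formula matches branches along $C$ and $E$.

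The main obstacle is precisely this last matching. In Theorem~\ref{HirzebruchGW} the normalization only has to be correct after contracting against $v$ and $w_k$, so one has the freedom of the boundary sum; here it must be correct relative-condition by relative-condition, with the fixed profiles $(\mu_C,\nu_C)$ and $(\mu_E,\nu_E)$ on the nose. The delicate point is to confirm that the factors $\zz(\mu)=|\Aut(\mu)|\prod_i\mu_i$, the $1/|\Aut|$ in the definition of the relative invariant, and the multiplicities of Li's formula cancel to leave exactly the basis vectors $|\mu_C,\nu_C\rangle$ and $Q_1^{|\mu_E+\nu_E|}|\mu_E,\nu_E\rangle$, and in particular that specializing the relative data along $E$ back to the free boundary recovers the vector $w_k$ and hence Theorem~\ref{HirzebruchGW}.
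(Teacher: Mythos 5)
Your proposal is correct and follows essentially the same route as the paper: the paper also computes $N^{\bullet}_{g,(d_1,d_2)}(\mu_C,\nu_C,\mu_E,\nu_E)$ by re-running the degeneration of Theorem \ref{HirzebruchGW} with the caps removed, so that the chain of $n$ copies of $F_k$ has only tube components (contributing $\exp(t\MM_{F_k})$ exactly as before) and the prescribed relative conditions $\eta_0=\mu_E[\uuu]+\nu_E[\ppp]$ and $\eta_n=\nu_C[\uuu]+\mu_C[\ppp]$ fix the boundary vectors $\langle\,\bmu_C,\bnu_C\,|$ and $Q_1^{|\mu_E+\nu_E|}\,|\,\bmu_E,\bnu_E\,\rangle$. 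Your $Q_1$-degree accounting via $\beta\cdot E=d_1-kd_2=|\mu_E|+|\nu_E|$ and the flagged $\zz$/$\Aut$ matching correspond precisely to the bookkeeping the paper leaves to the reader, so there is no substantive difference between the two arguments.
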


Transverse Gromov--Witten invariants as we defined in the previous section are a special case of relative Gromov--Witten invariants, and we obtain a formula for them as a result of Theorem \ref{relHirzebruchGW}.  The partition function for the transverse Gromov--Witten invariants of $F_k$ is

$$\mathsf{\hat{Z}}^{F_k}(u, Q_1, Q_2, t) =  1+\sum_{g\in \mathbb{Z}} u^{g-1} \sum_{(d_1,d_2)} \  \hat{N}_{g,(d_1,d_2)}^{\bullet} \frac{t^n}{n!} \ Q_1^{d_1} Q_2^{d_2}
$$
where the second sum is over all non-negative $d_i$ satisfying $(d_1,d_2)\neq (0,0)$ and $n = g-1+2d_1+(2-k)d_2$.

By restricting the choice of partitions $\mu,\nu$, we obtain the following corollary as a special case of Theorem \ref{relHirzebruchGW}.
\begin{Corollary} \label{logHirzebruchGW}
For every Hirzebruch surface $F_k$,
$$\mathsf{\hat{Z}^{F_k}} (u, Q_1, Q_2,  t) = \big\langle\ v \ | \  \exp\big(t\mathsf{M}_{F_k}(u,Q_1, Q_2)\big) \ | \ w_0\ \big\rangle.$$
\end{Corollary}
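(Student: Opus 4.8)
The plan is to obtain Corollary \ref{logHirzebruchGW} from Theorem \ref{relHirzebruchGW} by specializing the relative conditions to the maximally transverse ones and then resumming the resulting Fock-space insertions into closed form.

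First I would record that the transverse invariant $\hat N^{\bullet}_{g,\beta}$ is, by definition, the relative invariant $N^{\bullet}_{g,\beta}(\mu_C,\nu_C,\mu_E,\nu_E)$ with $\mu_C=\mu_E=(1,\dots,1)$ and $\nu_C=\nu_E=\emptyset$. Accordingly, in the four-fold sum of Theorem \ref{relHirzebruchGW} I would keep only the terms in which $\mu_C,\mu_E$ have all parts equal to $1$ and $\nu_C=\nu_E=\emptyset$; in terms of the bookkeeping variables this is the specialization $\by=\bw=0$, $x_i=z_i=0$ for $i\ge 2$, and $x_1=z_1=1$. The first thing to verify is that under this specialization the geometric side of Theorem \ref{relHirzebruchGW} collapses to $\hat{\mathsf Z}^{F_k}$: for a fixed class $\beta=d_1F+d_2E$ the transverse multiplicities are forced by the intersection numbers $\beta\cdot C=d_1$ and $\beta\cdot E=d_1-kd_2$, so only one length of $\mu_C$ and one length of $\mu_E$ survive, and the matching automorphism factors $|\Aut(1,\dots,1)|$ make the surviving term equal to the defining integral for $\hat N^{\bullet}_{g,(d_1,d_2)}$.

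Next I would resum the surviving insertion vectors on the Fock side. On the bra ($C$) factor there is no class weight and the unit-weighted condition sits in the $[\uuu]$-slot, so using $|(1^a),\emptyset\rangle=\tfrac{1}{a!}\alpha_{-1}[\uuu]^a\vac$ together with $\zz((1^a))=a!$ the sum assembles into $\sum_{a\ge 0}\langle (1^a),\emptyset|=\langle\exp(\alpha_{-1}[\uuu])\vac|=\langle v|$, exactly the left vector of the Corollary. On the ket ($E$) factor each transverse point carries the fibre weight recorded by $Q_1^{|\mu_E+\nu_E|}$, and the unit-weighted condition is paired through its Poincar\'e dual and hence enters as $\alpha_{-1}[\ppp]$; resumming over the number of transverse points should then reconstruct the $F_0$ cap $w_0$. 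Substituting the two closed forms into the specialized pairing yields $\hat{\mathsf Z}^{F_k}(u,Q_1,Q_2,t)=\langle v|\exp(t\MM_{F_k})|w_0\rangle$.

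The hard part will be the ket identification. One must keep careful track of the $\zz$-normalizations, the $u$-powers recording the genus, and above all the $[\uuu]\leftrightarrow[\ppp]$ Poincar\'e duality governing how a unit-weighted transverse condition along $E$ enters the inner product, so that the resummed $E$-insertions land on $w_0$ (built from $\alpha_{-1}[\ppp]$ and weighted by $Q_1$) rather than on a vector built from $\alpha_{-1}[\uuu]$; reconciling the exact placement of the $Q_1$ weights with the formula recorded for $w_0$ in \cite{CP} is the only genuinely delicate bookkeeping. By comparison the bra resummation and the dimension count on the geometric side are routine.
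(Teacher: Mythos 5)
Your route is the same as the paper's: the paper proves Corollary \ref{logHirzebruchGW} in a single line, as the specialization of Theorem \ref{relHirzebruchGW} to $\mu_C,\mu_E$ with all parts equal to $1$ and $\nu_C=\nu_E=\emptyset$, and your bra-side bookkeeping is exactly the omitted verification: the lengths are forced ($\ell(\mu_C)=\beta\cdot C=d_1$, $\ell(\mu_E)=\beta\cdot E=d_1-kd_2$), the automorphism factors match via $\zz((1^a))=a!$, and $\sum_a|(1^a),\emptyset\rangle=\exp(\alpha_{-1}[\uuu])\,v_\emptyset=v$. However, the step you yourself flag as the hard part you resolve backwards, and as written it would fail. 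In Theorem \ref{relHirzebruchGW} the transverse $E$-conditions already occupy the $[\uuu]$-slot of the ket basis vector, so the surviving kets are $Q_1^{b}\,|(1^b),\emptyset\rangle=\frac{Q_1^b}{b!}\,\alpha_{-1}[\uuu]^b\,v_\emptyset$, which resum to $\exp\bigl(Q_1\alpha_{-1}[\uuu]\bigr)v_\emptyset$ --- a vector built from $\alpha_{-1}[\uuu]$, precisely the one you insist the ket must \emph{not} be. No Poincar\'e flip is to be performed on the ket side: the duality is already encoded in the inner product via $\delta_{\mu\nu'}\delta_{\nu\mu'}$, which pairs $[\uuu]$-insertions of the bra against $[\ppp]$-insertions of the ket, and in the fixed boundary conditions $\eta_0=\mu_E[\uuu]+\nu_E[\ppp]$ used in the proof of the theorem. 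A ket built from $\alpha_{-1}[\ppp]$ is ruled out by two immediate sanity checks: at order $t^0$ one gets $\langle v\,|\exp(Q_1\alpha_{-1}[\ppp])v_\emptyset\rangle=e^{Q_1/u}\neq 1$, and since the point term $\sum_{i}\alpha_{-i}[\ppp]\alpha_i[\ppp]$ of $\MM_{F_k}$ annihilates $\alpha_{-1}[\ppp]v_\emptyset$, the fiber-class count $\hat{N}^{\bullet}_{0,(1,0)}=1$ would come out $0$.

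Your confusion traces to an inconsistency in the paper itself: the parenthetical ``$w_0=Q_1\exp(\alpha_{-1}[\ppp])$'' quoted from \cite{CP} is in transposed conventions (the same slip occurs in the first term of $w_1$ in Proposition \ref{ws}, which should read $Q_1\alpha_{-1}[\uuu]$ to be consistent with $w_2$ and $w_3$). In the conventions this paper actually runs on --- the definition of $w_k$, in which a cap condition $\nu[\ppp]$ contributes $\alpha_{-\nu}[\uuu]$, combined with the cap dimension count (\ref{CapNumerics}) at $k=0$, whose only geometric solution is $h=t=\ell(\sigma)=0$, $s=1$, $\tau=(1)$, exactly as in case (1) of the $F_2$ computation --- one gets $w_0=\exp\bigl(Q_1\alpha_{-1}[\uuu]\bigr)v_\emptyset$, which agrees with your resummed ket. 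So your specialization does prove the corollary; the fix is to accept the ket identification that the theorem dictates trivially and to identify $w_0$ accordingly, rather than engineering a duality flip so as to land on the $[\ppp]$-built vector.
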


While the Gromov--Witten invariants of $F_0$ are enumerative, in general for $F_k$ they are not.  However, it turns out that these transverse GW invariants are enumerative for all $F_k$.  Following Block--Goettsche \cite{BG} we define $\widetilde{N}_{g,(d_1,d_2)}^{\bullet}$ to be the number of (possibly disconnected) curves of genus $g$ in $| L(d_1 F + d_2 E)|$ passing through $n = g-1+2a+(2-k)b$ general points in $F_k$ which do not contain $E$ as a component.  We collect these enumerative curve counts in a generating function as well.

$$
\mathsf{\widetilde{Z}}^{F_k}(u, Q_1, Q_2, t) =1+\sum_{g\in \mathbb{Z}} u^{g-1} \sum_{(d_1,d_2)} \  \widetilde{N}_{g,(d_1,d_2)}^{\bullet} \frac{t^{n}}{n!} \ Q_1^{d_1} Q_2^{d_2}
$$
where the second sum is over all non-negative $d_i$ satisfying $(d_1,d_2)\neq (0,0)$.

Comparing Corollary \ref{logHirzebruchGW} to the following formula proved by Block and Goettsche gives a relationship between the Gromov--Witten invariants and enumerative geometry of the Hirzebruch surfaces $F_k$. 

\begin{Theorem*}[Block--Goettsche, \cite{BG}]
$\mathsf{\widetilde{Z}}^{F_k} (u, Q_1, Q_2, t) = \big\langle\ v \ | \  Q_1^{|\cdot|} \exp \big(t\mathsf{M}_{F_k}(u,Q_1,Q_2)\big) \ | w_0\ \big\rangle.$
\end{Theorem*}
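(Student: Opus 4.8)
The plan is to deduce the Block--Goettsche formula from Corollary \ref{logHirzebruchGW} together with the enumerativity of the transverse Gromov--Witten invariants. The two formulas are identical except for the insertion of the diagonal operator $Q_1^{|\cdot|}$, which multiplies a basis vector $|\mu,\nu\rangle$ by $Q_1$ raised to the size of its partition; geometrically this records the intersection of the enumerative curve with the section governing the Fock-space pairing. Accordingly the argument splits into a geometric step, identifying $\widetilde{N}^{\bullet}_{g,(d_1,d_2)}$ with a transverse relative invariant $\hat{N}^{\bullet}$, and a bookkeeping step, showing that the translation between the two indexings of the curve class is exactly the weight $Q_1^{|\cdot|}$.

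For the geometric step, I would first argue that a relative stable map meeting $E$ with all multiplicities equal to $1$ cannot carry $E$ (or a multiple cover of $E$) as a component: such a component would force the image to meet $E$ either in excess or with higher tangency, contradicting the transversality imposed along $E$. Hence the transverse invariants see precisely the curves that do not contain $E$, which are those counted by $\widetilde{N}^{\bullet}$. Next I would establish that the virtual count equals the honest geometric count: with all relative multiplicities equal to $1$ and exactly $n = g-1+2d_1+(2-k)d_2$ point conditions, a dimension count shows the general member of the relevant family is an immersion of a smooth (possibly disconnected) curve meeting $C\cup E$ transversally, so that $[\overline{M}^{\bullet}_{g,n}]^{vir}$ reduces to the ordinary fundamental class on this locus and the integral computes the number of such curves through the points.

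The bookkeeping step is where $Q_1^{|\cdot|}$ enters. The enumerative count fixes the complete linear system $|L(d_1F+d_2E)|$ on $F_k$, whereas the transverse invariant of Corollary \ref{logHirzebruchGW} is organized by the class that meets the relative divisor in the prescribed number of points; these two indexings of the same curve differ by a shift in the power of $Q_1$ equal to the number of intersection points along the section paired against $v$. Since $v=\exp(\alpha_{-1}[\uuu])\vac$ pairs, under the given inner product, only with states supported on a single-columned partition $(1^j)$, this shift equals $j$, the eigenvalue of the size operator on that state, which is precisely the weight recorded by $Q_1^{|\cdot|}$. Substituting the resulting identity of generating functions into Corollary \ref{logHirzebruchGW} yields the stated formula. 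Alternatively one may follow Block--Goettsche \cite{BG} directly: a tropical correspondence theorem expresses $\widetilde{N}^{\bullet}$ as a weighted count of floor diagrams in the tropicalization of $F_k$, and the floor-by-floor decomposition of this count factors into the transfer operator $\exp(t\MM_{F_k})$ acting between the boundary states $v$ and $w_0$, with $Q_1^{|\cdot|}$ tracking the horizontal degree.

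The main obstacle is the enumerativity assertion in the geometric step: verifying that the virtual class reduces to the honest count requires controlling all boundary strata of $\overline{M}^{\bullet}_{g,n}$ where the domain degenerates or components collapse, and checking that none of these contribute once the multiplicity-one and point conditions are imposed. Pinning down the class reindexing is the delicate remaining point — in particular confirming that the shift is uniform across strata so that it assembles into the single operator $Q_1^{|\cdot|}$; in the tropical approach this difficulty is instead migrated into the proof of the correspondence theorem and the multiplicativity of the floor-diagram weights.
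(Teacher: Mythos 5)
There is a genuine gap, and it is structural rather than technical. The statement you are asked to prove is not proved in this paper at all: it is quoted verbatim from Block--Goettsche \cite{BG}, where it is established by a tropical correspondence theorem together with a floor-diagram decomposition. The paper's only use of it is the other direction of your argument: it \emph{compares} the Block--Goettsche formula with Corollary \ref{logHirzebruchGW} to \emph{deduce} that the transverse Gromov--Witten invariants are enumerative, i.e.\ $\mathsf{\hat{Z}}^{F_k} = \mathsf{\widetilde{Z}}^{F_k}$. Your main route assumes exactly this enumerativity as an input and combines it with Corollary \ref{logHirzebruchGW} to recover the Block--Goettsche formula. Within the logical structure of the paper this is circular: enumerativity is not available before the theorem --- it is the corollary drawn from it. What you have really shown is that, granting Corollary \ref{logHirzebruchGW} and the bookkeeping of the $Q_1^{|\cdot|}$ insertion, the Block--Goettsche theorem and the enumerativity statement are equivalent; that equivalence is the content of the paper's subsequent corollary, not a proof of the theorem.

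Your attempt to break the circle by proving enumerativity directly is where the real difficulty sits, and the sketch does not survive scrutiny. The claim that a relative stable map meeting $E$ with all multiplicities equal to $1$ ``cannot carry $E$ as a component'' is false for the moduli spaces actually being integrated over: in Li's theory \cite{L}, components mapping into the relative divisor are not excluded but pushed into expanded degenerations, and multiple covers of $E$ (with its degree $-k$ normal bundle) are precisely the source of non-enumerativity of the absolute invariants $N^{\bullet}_{g,\beta}$ for $k>0$. Showing that such rubber and collapsed contributions vanish, and that the virtual class restricts to the honest fundamental class under multiplicity-one and point conditions, is work on the order of Vakil's analysis in \cite{V}; a dimension count alone does not deliver it. The bookkeeping step is likewise asserted rather than derived: you never compute the intersection numbers $(d_1F+d_2E)\cdot C = d_1$ and $(d_1F+d_2E)\cdot E = d_1 - kd_2$ that would pin down how the $Q_1$-grading of $\langle v| Q_1^{|\cdot|}$, the factors $Q_1^kQ_2$ inside $\mathsf{M}_{F_k}$, and the $Q_1$-weight of $w_0$ jointly reproduce the indexing of $|L(d_1F+d_2E)|$, and this verification is needed even for the equivalence you do establish. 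Your alternative route --- the tropical correspondence and floor-diagram factorization --- is indeed the actual proof in \cite{BG}, but as written it is a citation, not an argument: the correspondence theorem and the multiplicativity of floor weights, which you defer, are the entire content.
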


Thus we conclude that

\begin{Corollary}
The transverse Gromov--Witten invariants of the Hirzebruch surface $F_k$ are enumerative.  Namely,
$$
\mathsf{\hat{Z}}^{F_k} (u, Q_1, Q_2, t) = \mathsf{\widetilde{Z}}^{F_k} (u, Q_1, Q_2, t).
$$
\end{Corollary}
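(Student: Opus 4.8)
The plan is to read the Corollary off directly from the two operator formulas already in hand. Corollary~\ref{logHirzebruchGW} writes $\mathsf{\hat{Z}}^{F_k}$ as $\langle v \mid \exp(t\mathsf{M}_{F_k})\mid w_0\rangle$, while the Block--Goettsche theorem writes $\mathsf{\widetilde{Z}}^{F_k}$ as $\langle v \mid Q_1^{|\cdot|}\exp(t\mathsf{M}_{F_k})\mid w_0\rangle$. These two matrix elements share the same bra $\langle v\mid$, the same ket $\mid w_0\rangle$, and the same propagator $\exp(t\mathsf{M}_{F_k})$; they differ only by the diagonal operator $Q_1^{|\cdot|}$ inserted immediately to the right of $\langle v\mid$. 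Thus the whole statement reduces to the single assertion that this factor is inert in this matrix element,
\[
\big\langle v \mid Q_1^{|\cdot|}\exp\!\big(t\mathsf{M}_{F_k}\big)\mid w_0\big\rangle \;=\; \big\langle v \mid \exp\!\big(t\mathsf{M}_{F_k}\big)\mid w_0\big\rangle ,
\]
and I would isolate this as a lemma. Since both theorems give power series in $u,Q_1,Q_2,t$ with identical indexing conventions, the displayed identity is equivalent to $\hat{N}^{\bullet}_{g,(d_1,d_2)} = \widetilde{N}^{\bullet}_{g,(d_1,d_2)}$ for every $(g,d_1,d_2)$, which is the enumerativity claimed in the Corollary.

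First I would record how the $Q_1$-grading flows through the propagator. The operator $Q_1^{|\cdot|}$ is diagonal in the basis $|\mu,\nu\rangle$, and the structural point is that each summand of $\mathsf{M}_{F_k}$ moves the exponent of $Q_1$ and the total partition size in lockstep: the term $\sum_{i>0}\alpha_{-i}[\ppp]\alpha_i[\ppp]$ carries no $Q_1$ and preserves total size, while the term weighted by $Q_1^kQ_2$ changes total size by exactly $k$ and carries exactly $Q_1^k$. Hence $\mathsf{M}_{F_k}$, and therefore $\exp(t\mathsf{M}_{F_k})$, preserves the quantity (total size) minus ($Q_1$-degree); equivalently, conjugating the propagator by the size operator strips the $Q_1$ from it. This conservation law pins the power of $Q_1$ attached to any nonzero contribution to the bracket in terms of the sizes of the incoming and outgoing states.

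The second step is to exploit the special shape of the boundary vector $v = \exp(\alpha_{-1}[\uuu])\vac$. Because $v$ is built purely from $\uuu$-creation operators, the pairing rule $\langle\mu,\nu\mid\mu',\nu'\rangle\propto\delta_{\mu\nu'}\delta_{\nu\mu'}$ forces $\langle v\mid$ to annihilate every basis vector whose $\uuu$-part is nonempty: it is supported entirely on the pure-$\ppp$ states $|\emptyset,\nu\rangle$ with $\nu$ a partition into $1$'s. On this sector the value of $|\cdot|$ is completely determined by the conservation law of the previous paragraph, so that inserting $Q_1^{|\cdot|}$ merely reproduces the $Q_1$-exponent already forced by the prefactor of $w_0$ together with the number of $Q_1^kQ_2$-insertions coming from the second term of $\mathsf{M}_{F_k}$. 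Carrying the normalizing factors $\tfrac{1}{|\Aut(\mu)|\cdots}$ through the inner product then gives the two generating functions coefficient by coefficient.

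The hard part is exactly this last piece of bookkeeping: identifying precisely what $|\cdot|$ measures on the pure-$\ppp$ sector cut out by $v$, and checking that its value there is absorbed by the conservation law rather than producing a net shift in the $Q_1$-exponent. Since the only sources of $Q_1$ in the bracket are the explicit prefactor in $w_0$ and the $Q_1^k$ in the second summand of $\mathsf{M}_{F_k}$, the argument hinges on showing that the extra factor supplied by $Q_1^{|\cdot|}$ is already accounted for by these two, so that it acts as the identity on the image of the projection dual to $\langle v\mid$. Once that degree accounting is pinned down, the inertness lemma follows, and with it the equality $\mathsf{\hat{Z}}^{F_k}=\mathsf{\widetilde{Z}}^{F_k}$ and the enumerativity of the transverse Gromov--Witten invariants.
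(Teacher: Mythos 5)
Your reduction is framed correctly, and it is worth noting that the paper itself offers no argument beyond juxtaposing Corollary~\ref{logHirzebruchGW} with the Block--Goettsche theorem (``Thus we conclude''), so the degree bookkeeping you attempt is exactly the missing content. But your pivot lemma --- that $Q_1^{|\cdot|}$ is inert in the bracket --- is false as stated, and your own conservation law shows why. You correctly establish that $\mathsf{M}_{F_k}$ preserves (total size) minus ($Q_1$-degree), and that $\langle v|$ pairs only with states $|\emptyset,(1^d)\rangle$. Now push the grading through $\langle v|\exp(t\mathsf{M}_{F_k})|w_0\rangle$: the states of $w_0$ have size equal to their $Q_1$-exponent (size $d_0$ with coefficient $Q_1^{d_0}$), and each insertion of the second summand of $\mathsf{M}_{F_k}$ adds $k$ to the size while contributing $Q_1^k$. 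Hence every contribution to the coefficient of $Q_1^{d_1}Q_2^{d_2}$ arrives at $\langle v|$ in the state $|\emptyset,(1^{d_1})\rangle$ of size exactly $d_1$ (geometrically, $d_1 = C\cdot(d_1F+d_2E)$ is the number of transverse points on $C$). So on the support of $\langle v|$ the operator $Q_1^{|\cdot|}$ has eigenvalue $Q_1^{d_1}$, not $1$: inserting it multiplies each coefficient by an \emph{extra} $Q_1^{d_1}$, giving $\big\langle v \,\big|\, Q_1^{|\cdot|}\exp\big(t\mathsf{M}_{F_k}(u,Q_1,Q_2)\big)\,\big|\,w_0\big\rangle = \mathsf{\hat{Z}}^{F_k}(u,Q_1^2,Q_2,t)$. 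Your claimed inertness would therefore force $\mathsf{\hat{Z}}^{F_k}(u,Q_1,Q_2,t)=\mathsf{\hat{Z}}^{F_k}(u,Q_1^2,Q_2,t)$, which already fails for the ruling class $(1,0)$ in genus $0$. Your third paragraph explicitly flags this as ``the hard part'' --- whether the eigenvalue is absorbed or produces a net shift --- and then asserts the wrong answer: reproducing the exponent is a net shift, not the identity.

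The correct reconciliation is that in the Block--Goettsche bracket $Q_1^{|\cdot|}$ is the device that installs the \emph{entire} $Q_1$-grading at the end; their propagator and boundary vector must be read without internal $Q_1$ (the paper's transcription, which leaves $Q_1$ inside $\mathsf{M}_{F_k}$, is loose on this point --- compare the $Q_1$-free operator $\mathsf{M'}_{F_k}$ appearing later in the paper). The identity actually needed is
$$
\big\langle\, v \,\big|\, Q_1^{|\cdot|}\exp\big(t\mathsf{M}_{F_k}(u,1,Q_2)\big) \,\big|\, \exp(\alpha_{-1}[\ppp])\vac \,\big\rangle
\;=\;
\big\langle\, v \,\big|\, \exp\big(t\mathsf{M}_{F_k}(u,Q_1,Q_2)\big) \,\big|\, w_0 \,\big\rangle,
$$
and this follows from precisely the computation above: since every surviving contribution terminates in a state of size $d_1$, weighting the final state by $Q_1^{|\cdot|}$ recreates the same monomial $Q_1^{d_1}$ that the graded operator and boundary vector would have distributed along the way. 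So your conservation law is the right mechanism and does all the work; the error is only in which two brackets you equate with it. With this regrading identity in place, comparison with the Block--Goettsche theorem yields $\mathsf{\hat{Z}}^{F_k} = \mathsf{\widetilde{Z}}^{F_k}$ coefficient by coefficient, which is the corollary.
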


\subsubsection{Formulas for $\proj^2$}
Let the variable $Q$ correspond to the hyperplane class.  The partition function for Severi degrees of $\proj^2$ is
$$
\mathsf{Z}^{\proj^2} (u, Q, t) =1+\sum_{g\in \mathbb{Z}} u^{g-1} \sum_{d} \  N_{g,d}^{\bullet} \frac{t^{n}}{n!} \ Q^d
$$
where $n = 3d+g-1$.

\begin{Theorem} \label{P2}
${\mathsf Z}^{\proj^2} (u, Q, t)= \big\langle\ v \ | \ \exp\big(t\mathsf{M}_{F_1}(u,Q)\big) \ | v_{\emptyset}\ \big\rangle.$
\end{Theorem}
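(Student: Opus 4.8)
The plan is to deduce the statement from Theorem \ref{HirzebruchGW} applied to the Hirzebruch surface $F_1$, exploiting that $F_1$ is the blow-up $\pi\colon F_1 \to \proj^2$ at a point $p$, with exceptional divisor $E$ satisfying $E\cdot E = -1$. Writing $H = \pi^{*}\mathcal{O}_{\proj^2}(1)$, in the basis $\{F,E\}$ of the Picard group one has $H = F+E$, so $\pi_{*}F = \ell$ (the line class) and $\pi_{*}E = 0$; a curve on $F_1$ of class $d_1 F + d_2 E$ therefore pushes forward to a curve of degree $d_1$ on $\proj^2$. Because the $n = 3d+g-1$ incidence conditions are imposed at general points, a general degree-$d$ genus-$g$ curve in $\proj^2$ avoids $p$, so its proper transform meets $E$ in $dH\cdot E = 0$ points and lies in the class $dH = dF + dE$, i.e.\ $(d_1,d_2)=(d,d)$. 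The point counts match, since on $F_1$ one has $n = g-1+2d_1+(2-1)d_2 = 3d+g-1$ when $d_1=d_2=d$.

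First I would run the same degeneration used to establish Theorem \ref{HirzebruchGW} for $F_1$, degenerating the base of the ruling $F_1\to\proj^1$ into a chain and distributing the point insertions across the components. This expresses the generating function as a matrix element $\langle v\,|\,\exp(t\mathsf{M}_{F_1})\,|\,\mathrm{cap}\rangle$, in which the propagator $\mathsf{M}_{F_1}$ records the insertion of one point on a fiber, the bra $v$ records the seed end, and the ket cap records the relative geometry at the opposite end where the negative section $E$ sits; for the full surface $F_1$ this opposite cap is the vector $w_1$ of Proposition \ref{ws}. To pass to $\proj^2$ I would contract $E$: since the transforms of $\proj^2$ curves meet $E$ trivially, there is no relative boundary along $E$ to record, so the $E$-end contributes the empty state and the cap is replaced by the vacuum $v_\emptyset$. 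The propagator $\mathsf{M}_{F_1}$ and the seed $v$ are untouched, as they involve only the fiber direction and the complementary section, which are unaffected by contracting $E$. Finally, since $\pi_{*}$ collapses the two curve-class parameters to the single hyperplane degree and $Q^{H}=Q^{F+E}$, I would specialize $Q_1 Q_2 \mapsto Q$ (for instance $Q_1=Q$, $Q_2=1$), turning $\mathsf{M}_{F_1}(u,Q_1,Q_2)$ into $\mathsf{M}_{F_1}(u,Q)$ and yielding the claimed formula.

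The hard part will be justifying the replacement of $w_1$ by $v_\emptyset$ at the level of virtual invariants, i.e.\ proving that the contributions encoded in $w_1$ --- stable maps with components meeting or mapping multiply onto $E$ --- are exactly what the blow-down comparison removes, so that the vacuum cap computes the Gromov--Witten invariants $N^{\bullet}_{g,d}$ of $\proj^2$ in the classes $dH$. Concretely, I would compare the virtual classes of the relevant moduli spaces under $\pi$, using that $E$ is rigid and that all insertions are pulled back from general points of $\proj^2$ (hence supported away from $p$), to show that maps with nontrivial $E$-behavior do not contribute; the explicit form $w_1 = \exp\bigl(Q_1\alpha_{-1}[\ppp] + Q_2/u\bigr)v_\emptyset$ from Proposition \ref{ws} makes this comparison checkable term by term, since discarding exactly the $E$-boundary terms returns $v_\emptyset$. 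An equivalent route, avoiding an independent blow-down analysis, would be to manipulate $\langle v\,|\,\exp(t\mathsf{M}_{F_1})\,|\,w_1\rangle$ algebraically, extracting the part of the expansion in which the $F$- and $E$-degrees agree and identifying it with $\langle v\,|\,\exp(t\mathsf{M}_{F_1}(u,Q))\,|\,v_\emptyset\rangle$.
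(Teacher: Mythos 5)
Your proposal takes a genuinely different route from the paper, and the route has a gap at exactly the step you flag as ``the hard part.'' The paper does not deduce Theorem \ref{P2} from Theorem \ref{HirzebruchGW} by blowing down: it reruns the degeneration argument directly on $\proj^2$, degenerating to the normal cone of a line repeatedly to get a chain whose cap $S_0$ is isomorphic to $\proj^2$ (relative a line) and whose remaining $n+1$ components are copies of $F_1$. The tube and $S_{n+1}$ analyses are then literally those of the $F_1$ case, producing $\exp\bigl(t\mathsf{M}_{F_1}\bigr)$ and the bra $v$, while for the $\proj^2$-cap, which carries no point conditions, the dimension count (the analogue of equation (\ref{CapNumerics}) reads $2s+h+\ell(\sigma)=1$, forcing $s=0$ and hence a vanishing curve class) shows the relevant moduli space is \emph{empty}, so the cap contributes the vacuum $v_\emptyset$. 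No comparison of virtual classes under $\pi\colon F_1\to\proj^2$ is ever invoked, and none is needed.

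The gap in your version is that the replacement $w_1\rightsquigarrow v_\emptyset$ cannot be justified by contracting $E$, because the premise $N^{\bullet,F_1}_{g,(d,d)}=N^{\bullet,\proj^2}_{g,d}$ is false for the disconnected invariants assembled in these partition functions. Concretely: $w_1=\exp\bigl(Q_1\alpha_{-1}[\ppp]+Q_2/u\bigr)v_\emptyset$, and the two exponent terms carry $(Q_1,Q_2)$-weights $(1,0)$ and $(0,1)$, so their products land on every diagonal coefficient; your fallback of ``extracting the part where the $F$- and $E$-degrees agree'' therefore does not kill them. For instance the cross term $Q_1Q_2\,u^{-1}\alpha_{-1}[\ppp]\,v_\emptyset$ pairs nontrivially with $\langle v|\exp\bigl(t\mathsf{M}_{F_1}\bigr)$ --- applying the $\alpha_{-\mu}[\uuu]\alpha_\nu[\uuu]$ summand and then the $\alpha_{-i}[\ppp]\alpha_i[\ppp]$ summand sends it to a nonzero multiple of $\alpha_{-1}[\ppp]^2v_\emptyset$, which $\langle v|$ sees --- and geometrically these are disconnected configurations with components multiply covering $E$ (the scalar $e^{Q_2/u}$) or fiber components attached to the $F_1$-cap, which contribute to the disconnected $F_1$ theory in class $dF+dE$ but have no counterpart on $\proj^2$. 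Note also that the term $Q_1\alpha_{-1}[\ppp]$ in $w_1$ records \emph{fiber}-class cap curves with a point-type relative condition, not any $E$-behavior, so ``discarding exactly the $E$-boundary terms'' would not return $v_\emptyset$ either. Finally, even for the connected theory, a blow-down comparison of virtual classes in arbitrary genus is a substantial theorem in its own right, not an off-the-shelf tool; the paper's direct degeneration of $\proj^2$, with the vacuum cap produced by a two-line dimension count, avoids all of this.
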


In \cite{CP} a method for computing the generating function for connected Gromov--Witten invariants of $\proj^2$ is given.  Theorem \ref{P2} is a more direct formula for computing the Gromov--Witten invariants of $\proj^2$.

As with the Hirzebruch surfaces, we can also study the relative Gromov--Witten invariants of $\proj^2$ with relative conditions $\mu[\uuu]+\nu[\ppp]$ imposed along a line $L$.  The proof of Theorem \ref{P2} also yields the following formula for relative Gromov--Witten invariants on $\proj^2$.
\begin{Corollary} \label{relP2}
${\mathsf Z}^{\proj^2}(u, Q, t, \bx, \by)   = \big\langle\ \bmu, \bnu \ | \ \exp\big(t\mathsf{M}_{F_1}(u,Q)\big) \ | v_{\emptyset}\ \big\rangle.$
\end{Corollary}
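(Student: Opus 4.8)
The plan is to reuse the degeneration computation behind Theorem \ref{P2} essentially verbatim, refraining only from specializing the relative conditions imposed along the line $L$. Recall that the proof of Theorem \ref{P2} realizes the Severi generating function of $\proj^2$ through a toric degeneration of the plane: the degree-$d$ Newton triangle is sliced by the horizontal lines into unit-height trapezoids, each of which is the polygon governing an $F_1$ block. Applying the degeneration formula of Li \cite{L} exactly as in the proof of Theorem \ref{HirzebruchGW}, the internal gluing divisors are summed over and assemble into the transfer operator $\exp\!\big(t\MM_{F_1}(u,Q)\big)$, with $t$ distributing the point conditions among the slices. The apex of the triangle carries no geometric condition and contributes the ket $|\vac\rangle$, while the bottom edge is the line $L$ and contributes the bra.

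In that proof the left vector $\langle v|$ arises because, for the \emph{absolute} invariants, the $d$ intersections of the curve with $L$ are required to be simple and free; this is the cohomology-weighted partition $\mu=(1^d),\ \nu=\emptyset$, and summing over $d$ gives $\langle v| = \sum_d \langle (1^d),\emptyset|$, i.e. $v=\exp(\alpha_{-1}[\uuu])\vac$. The only place the conditions along $L$ enter the degeneration is through this bottom boundary vector: the transfer operator $\MM_{F_1}$ and the apex vector $|\vac\rangle$ are manufactured entirely from the interior slices and are insensitive to what is imposed along $L$. I would therefore rerun the identical degeneration argument while imposing an arbitrary relative condition $\mu[\uuu]+\nu[\ppp]$ along $L$. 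The effect is only to replace the specialized boundary vector $\langle v|$ by the single basis covector $\langle \mu,\nu|$, leaving $\exp\!\big(t\MM_{F_1}\big)$ and $|\vac\rangle$ untouched.

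To assemble the full relative generating function one then records each relative condition with the monomial $\bx^\bmu\by^\bnu$, so that the bottom boundary becomes $\langle\bmu,\bnu| = \sum_{\mu,\nu}\langle\mu,\nu|\,\bx^\bmu\by^\bnu$. Matching this against the definition of $\mathsf{Z}^{\proj^2}(u,Q,t,\bx,\by)$ is a matter of checking that the automorphism prefactor $1/\big(|\Aut(\mu)||\Aut(\nu)|\big)$ in the relative invariant is exactly the normalization already built into the $\zz$-scaled basis $|\mu,\nu\rangle$ and the inner product $\langle\mu,\nu|\mu',\nu'\rangle$, and that the powers of $u$ and $Q$ track the genus and degree as in Theorem \ref{P2}. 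This yields
$$
{\mathsf Z}^{\proj^2}(u, Q, t, \bx, \by) = \big\langle\ \bmu, \bnu \ | \ \exp\big(t\MM_{F_1}(u,Q)\big) \ | \vac\ \big\rangle.
$$
As a consistency check, specializing $x_1=1$ and $x_i=0\ (i\ge 2)$, $y_j=0\ (\text{all }j)$ collapses $\langle\bmu,\bnu|$ to $\sum_d\langle(1^d),\emptyset| = \langle v|$ and recovers Theorem \ref{P2}, confirming that the relative formula is the unspecialized form of the same computation.

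The step I expect to be the main obstacle is precisely this last bookkeeping. One must verify that imposing relative (rather than generic) conditions along $L$ lowers the number of free point conditions—the exponent $n$ in $t^n/n!$—by exactly the codimension of $\mu[\uuu]+\nu[\ppp]$, while altering nothing in the interior of the degeneration, and that the $\zz$-factors together with the $u^{-\ell}$ weights in the inner product reproduce the $\Aut$-normalization of the relative invariant $N^{\bullet}_{g,d}(\mu,\nu)$ with no spurious combinatorial discrepancy. Once the boundary covector $\langle\bmu,\bnu|$ is seen to carry exactly the right normalization, no geometric input beyond the proof of Theorem \ref{P2} is required.
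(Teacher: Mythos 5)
Your proposal is correct and takes essentially the same route as the paper: the paper likewise modifies the degeneration behind Theorem \ref{P2} by deleting the cap $S_{n+1}$ (the source of the bra $\langle v|$) and imposing the prescribed relative condition $\eta_n=\mu[\ppp]+\nu[\uuu]$ on the last $F_1$ component, so that $v$ is replaced by the covector $|\mu,\nu\rangle$ while the interior operator $\exp\big(t\MM_{F_1}\big)$ and the $\proj^2$-cap vector $v_\emptyset$ are untouched, with the point count adjusted to $n=3d+g-1-(|\mu|+|\nu|-\ell(\nu))$ exactly as you anticipate. Your Newton-polygon slicing is only a toric rephrasing of the same iterated degeneration to the normal cone of the line $L$, and the normalization bookkeeping you flag is precisely what the paper also leaves implicit.
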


\section{Proof of Theorem \ref{HirzebruchGW}}\label{pfmain}
\subsection{Overview}
We prove Theorem \ref{HirzebruchGW} via the degeneration formula for relative Gromov--Witten invariants \cite{IP,LR,L}.
Consider the Gromov--Witten invariant $N_{g,(d_1,d_2)}^{\bullet}$ counting genus $g$ curves in $F_k$ in the class $d_1 F + d_2 E$, passing through $$n=g-1+2d_1+(2-k)d_2$$ points.

We will compute $N_{g,(d_1, d_2)}^{\bullet}$ by applying the degeneration formula to the following degeneration.  Let $X = F_k \times \Delta$, where $\Delta$ is the disc.  In the fiber $X_0 \simeq F_k$, let $C_k$ be a curve in the class $E + kF$ (classically $C_k$ is called a co-directrix in $F_k$).  Consider the three-fold $Bl_{C_k} X$.  The special fiber is the union of two copies of $F_k$, with the divisor $C_k$ in the original $F_k$ glued to the exceptional divisor $E$ in the second $F_k$. 

We iterate this construction $n+1$ times, and obtain a chain of $n+2$ surfaces, each isomorphic to $F_k$.  This is the degeneration of $F_k$ we will use, and we distribute the original $n$ point conditions by placing one on each of the middle $n$ components.

\begin{figure}[h] \label{DegenerationPic}
\centering
\includegraphics[width=6 in]{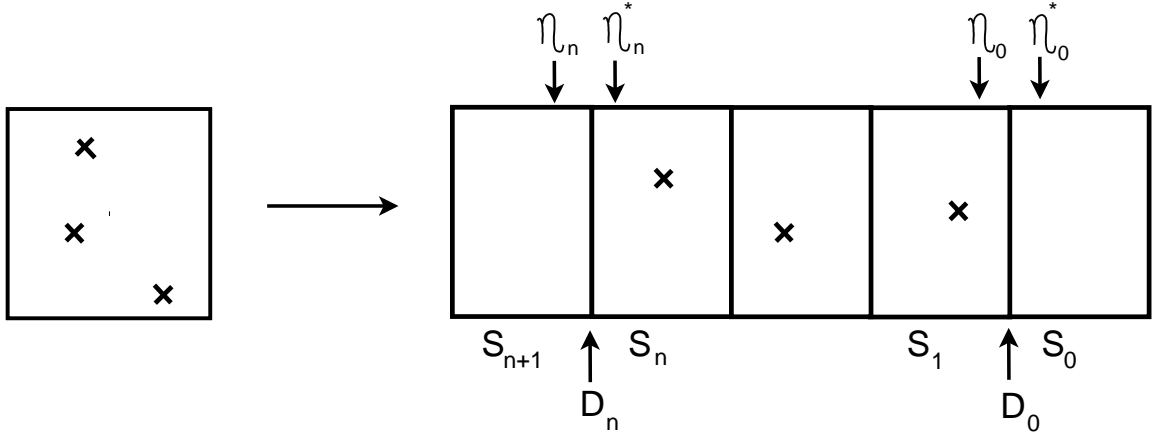}
\caption{The first and last components carry no point conditions, all the rest have one point condition each.  Note that $(1,0)$ is the class of $F$ (horizontal line) and $(0,1)$ represents the class $E + kF$.}
\end{figure}

We will refer to the $n+2$ components of the degeneration as
$S_{0}, \ldots , S_{n+1}$
and the $n+1$ relative divisors as $D_0,\ldots,D_n$.
The matrix $\mathsf{M}_{F_k}(u, Q_1, Q_2, t) $ arises from explicit calculations
on components of this degeneration.

\subsection{Stable relative maps}
A moduli space of stable relative maps is defined for each component $S_i$ of the
above degeneration.
Relative conditions along the divisors $D_i$ are specified
by partitions weighted by
 the cohomology of $\proj^1$.
For $S_i$ where
 $1 \leq i \leq n$, let
$$M^i = \overline{M}^{\bullet}_{g_i,1}(D_i \backslash\, F_k / D_{i-1} , (d_1, d_2^{(i)}), \eta^*_{i}, \eta_{i-1}, \Gamma^i)$$
be the moduli space of stable relative maps of graph type $\Gamma^i$ to the component $S_i$ in the class $d_1 F + d_2^{(i)} E$ satisfying relative conditions $\eta_{i-1}$ along the divisor $D_{i-1}$ and  $\eta^*_{i}$ along $D_{i}$.

The graph type $\Gamma^i $ fixes the topology of the map.  Each vertex of $\Gamma^i$
corresponds to a component of the domain curve and is labeled with the genus of that component.
For each relative condition on that domain curve the vertex is given a half-edge labeled
with the corresponding relative condition. The unique marked point is assigned
to a single component of $\Gamma^i$ (which satisfies the incidence condition).

The components $S_0$ and $S_{n+1}$ play a special role. Following the above
conventions, let
$$M^0 = \overline{M}^{\bullet}_{g_0,0}(D_0 \backslash F_k, (d_1, d_2^0), \eta^*_0, \Gamma^0)$$
and let
$$M^{n+1} = \overline{M}^{\bullet}_{g_{n+1},0}( F_k / D_{n}, (d_1, d_2^{n+1}), \eta_n, \Gamma^{n+1})\ .$$
For all the above moduli spaces $M^i$, we will view the relative markings  on the domain of the
map
as {\em ordered}.

\subsection{Partition notation}
We take all our partitions to be ordered partitions.
\begin{defn}
Let $\rho$ be a partition of $d$ and let $\rho(k)$ be the number of parts of size $k$ in $\rho$, so $d = \sum_{k = 1}^{\infty} \rho(k) k$.
\end{defn}

Let $\rho = \rho_1 + \ldots + \rho_m$ and $\lambda = \lambda_1 + \ldots + \lambda_n$ be two partitions, and $d = |\rho| + |\lambda|$.  We say
$$\rho [\uuu] + \lambda [\ppp] = \rho_1 [\uuu] + \ldots + \rho_m[\uuu] + \lambda_1 [\ppp]
+ \ldots + \lambda_n [\ppp]$$
is a cohomology weighted partition of $d$, weighted by the cohomology of $\proj^1$.

Let $\cup$ denote concatenation of partitions,
$$\rho \cup \lambda = \rho_1 + \ldots + \rho_m + \lambda_1 + \ldots + \lambda_n.$$

\begin{defn}
Let $\eta = \rho [\uuu] + \lambda [\ppp]$ be a partition weighted by the cohomology of $\proj^1$.  Let
$$m(\eta)= \prod_i \rho_i \prod_j \lambda_j,\ \ \
\text{\em Aut}(\eta) = \text{\em Aut}(\rho) \times \text{\em Aut}(\lambda),\ \ \
\eta^* = \lambda [\uuu] + \rho [\ppp].$$
\end{defn}
For example, if $\eta = 2 [\uuu] + [\uuu] + [\uuu] + 3 [\ppp]$, $m(\eta) = 6$ and $|Aut(\eta)| = 2$.

\subsection{Degeneration} \label{poneponeDegen}

By the degeneration formula of \cite{IP,LR,L}, 

\begin{align} \label{degenformula}
N^\bullet_{g,(d_1,d_2)}
& = \sum_{d^i_2,\eta_i,\Gamma^i}
\left(\int_{[M^{n+1}]} 1 \right) \frac{m(\eta_{n})}{ |\text{Aut}(\eta_n)|} \prod_{i = 1}^{n} \left[ \left(\int_{[M^i]} \text{ev}_1^*(\ppp) \right) \frac{m(\eta_{i-1})}{ |\text{Aut}(\eta_{i-1})|}  \right]
\left(\int_{[M^{0}]} 1 \right)\ .
\end{align}
The sum is over all degree splittings $$d_2^0 + \cdots + d_2^{n+1}=d_2,$$
relative conditions $\eta_0, \ldots, \eta_n$, and compatible
graph types $\Gamma^0,\ldots \Gamma^{n+1}$ which connect to form a genus $g$ curve.
The relative conditions $\eta_i^*$ are set by Definition 2.
On the right side, $[M^i]$ denotes the virtual fundamental class of the moduli
space $M^i$.

Equivalently, we can write the partition function of the Severi degrees of the surface
$F_k$ as
\begin{multline} \label{RelativeEqn}
 \mathsf {Z}^{F_k}(u, Q_1, Q_2, t) = 1 + \sum_{g,d_1,d_2} Q_1^{d_1} Q_2^{d_2} u^{g-1}\frac{t^n}{n!} \sum_{d_2^{(i)},\eta_i,\Gamma^i}
\left(\int_{[M^{n+1}]} 1 \right) \frac{m(\eta_{n})}{ |\text{Aut}(\eta_n)|} \\
 \times \prod_{i = 1}^{n} \left[ \left(\int_{[M^i]} \text{ev}_1^*(\ppp) \right)
\frac{m(\eta_{i-1})}{ |\text{Aut}(\eta_{i-1})|}  \right]
\left(\int_{[M^{0}]} 1 \right).
\end{multline}
In the above formula, $n=g-1+2d_1+(2-k)d_2$ as usual.

\subsection{Geometry of the tube components}  \label{tube}
We begin by analyzing the components  $S_i$ for $1 \leq i \leq n$.  Let
$$\eta_{i-1} = \rho[\uuu] + \lambda[\ppp], \ \ \  \eta_i^* = \rho'[\uuu] + \lambda' [\ppp].$$
We consider a single genus $h$ connected component $R$ of the domain curve of a map to $S_i$.  Let
$$\sigma[\uuu] + \tau[\ppp] \ \ \text{and} \ \ \sigma'[\uuu] + \tau'[\ppp]$$
be the relative conditions imposed on $R$ along $D_{i-1}$ and $D_{i}$ respectively.
Let $f_*[R] = \beta = sF + t E$.  In the relative geometry, no component of $R$ is allowed to fall into the relative divisor.  So the class $\beta$ must satisfy $s \geq kt$.  

As $M^i$ is a moduli space of 1-pointed curves, there are two cases: either the marked point lies on $R$ or $R$ is unpointed.

Consider first the case where $R$ does not carry a marked point. Then,
$$\mathrm{dim}_\com \ \overline{M}_{h,0}(F_k, \beta)  =  2s + (2-k)t + h - 1\ .$$
The relative conditions impose
\begin{align*}
\sum (\sigma_i-1) + \sum (\sigma'_{i'}-1) + \sum \tau_j  + \sum \tau_{j'} &= (sF_k+tE)\cdot C_k + (sF_k+tE) \cdot E - \ell(\sigma) - \ell(\sigma') \\
& = 2s - kt -\ell(\sigma) - \ell(\sigma')
\end{align*}
conditions, where $\sigma_i$ are the parts of the partition $\sigma$, and so on.
After equating dim $\overline{M}_{h,0}(F_k, \beta)$ with the number of relative conditions imposed, we obtain
$$2t+ h + \ell(\sigma) + \ell(\sigma') = 1. $$

The two solutions (up to exchanging $\sigma$ and $\sigma'$) are

$$h = 0,\  t = 0,\ \ell(\sigma) = 1, \ \ell(\sigma') = 0.$$
$$h = 1,\  t = 0,\ \ell(\sigma) = 0, \ \ell(\sigma') = 0.$$

Only the first is geometrically possible, so there is a unique configuration allowed, consisting of a rational curve mapping with unconstrained degree $m$ to the line in the class $(1,0)$.  We see $\sigma' = \tau = \emptyset$ and $\sigma = \tau'$ must be a partition with a single part.  The map is ramified totally over $D_{i-1}$ and $D_i$, and the image in the class $(1,0)$ is determined by the fixed condition $\tau_1' [\ppp]$ .  The moduli space of such maps is isomorphic as a stack to $B (\Z/m)$.

We consider next the case where $R$ carries the marked point.  Then
$$\mathrm{dim}_\com \ \overline{M}_{h,1}(F_k, \beta) = 2s + (2-k)t + h.$$
As in the previous case, the relative conditions impose
$$\sum (\sigma_i-1) + \sum (\sigma'_{i'}-1) + \sum \tau_j + \sum \tau_{j'} = 2s - kt - \ell(\sigma) - \ell(\sigma')$$
conditions on such a map.  Setting their difference equal to 2, the degree of $\text{ev}_1^*(\ppp)$, we obtain
$$2t + h + \ell(\sigma) + \ell(\sigma') = 2.$$
Again, the terms on the left hand side are all nonnegative.  The arithmetically allowed solutions are
\begin{eqnarray*} 
h= 0,\ t=0,\ \ell(\sigma) = \ell(\sigma') = 1,\\
h = 0,\ t = 1, \ \ell(\sigma) = \ell(\sigma') = 0,\\
h=0, t = 0, \ell(\sigma) = 2, \ell(\sigma') = 0,\\
h=1,t=0,\ell(\sigma)=1,\ell(\sigma')=0,\\
\mathrm{and} \ \ h=2,t=0,\ell(\sigma)=0,\ell(\sigma')=0
\end{eqnarray*}
with $s$ and $\ell(\tau)$ unconstrained.

However, due to geometric constraints, only the following two types of configurations can appear:
\begin{eqnarray*} \label{TypeA}
\mathrm{Type \ A:} & & \ \ h= 0,\ t=0,\ \ell(\sigma) = \ell(\sigma') = 1, s \neq 0\\
 \label{TypeBC}
\mathrm{Type \ B:} & &\ \ h = 0,\ t = 1, \ \ell(\sigma) = \ell(\sigma') = 0,\ s \geq k. \\
\end{eqnarray*}

If $R$ is a component of Type A, then $\sigma = \sigma' = |\sigma|$ is a partition with only one part and $R$ is a 1-pointed rational curve mapping with degree $|\sigma|$ to a line in the class $(1,0)$ totally ramified over the two relative divisors $D_{i-1}$ and $D_i$. The moduli space $M$ of such maps is isomorphic to $\proj^1$.  Since $R$ has a marked point and the map has two ramification points, there are no automorphisms of this map and $\int_{[M]} ev_1^*(\ppp) = 1$.

If $R$ is a component of Type B, then $R$ is a 1-pointed rational curve mapping to $F_k$ in the class $(s,1)$, subject to the relative conditions $\sigma_i, \sigma'_{i'}, \tau_j, \tau_{j'}$.  By writing explicit equations that cut out a general curve in class $(s,1)$, one can compute $\int_{[M]} ev_1^*(\ppp) = 1$.

In conclusion, if the partitions $\eta_{i-1}$ and $\eta_i$ are such that the domain curve has $k$ components and the $k-1$ unmarked components map to $F_k$ with degrees $m_1,...,m_{k-1}$ repsectively, then
$$\int_{[M^i]} ev_1^*(\ppp) = \frac{1}{m_1 \cdot ... \cdot m_{k-1}}.$$

\subsection{Geometry of the cap $S_{n+1}$} \label{section:leftcap}
We will now analyze the integrals appearing in the degeneration formula \eqref{RelativeEqn}.

On the component $S_{n+1}$, let the relative condition be
$$\eta_n = \rho[\uuu] + \lambda [\ppp],$$
where $\rho$ and $\lambda$ are partitions satisfying $| \rho | + | \lambda | = d_1.$

Let $R$ be a component of the domain curve of a map to $S_{n+1}$ parameterized
by $M^{n+1}$, and let
$$\sigma[\uuu] + \tau[\ppp]$$
be the relative condition imposed on $R$.  Suppose the genus of $R$ is $h$ and $f_*[R] = \beta = (s,t)$.  The dimension of the space of such maps is
$$\mathrm{dim}_\com \ \overline{M}_{h,0}(F_k, \beta)  = \int_{\beta} c_1(T_{F_k}) + h-1 = 2s + (2-k)t+ h - 1.$$
Meanwhile the number of conditions imposed on the map by the relative conditions is
$$\sum (\sigma_i - 1) + \sum \tau_j = (sF + tE)(E) - \ell(\sigma) = s- kt - \ell(\sigma).$$
After setting the dimension to equal the number of conditions,  we obtain
$$s+2t + h + \ell(\sigma) = 1.$$

Each term on the left hand side is nonnegative, so $t$ must be zero.  Since $\beta$ is nonzero, $s$ cannot also vanish.  Then the only possible solution is
$$s=1 \ \mathrm{and} \ t=h=\ell(\sigma)=0.$$
Therefore $R$ must be a genus 0 curve mapping with degree 1 onto a line in the class $(1,0)$ with the relative condition $[\ppp]$.

We conclude the integral over $M^{n+1}$ vanishes unless
$$d_2^{n+1} = 0, \  \eta_n = [\ppp] + \ldots + [\ppp],$$
and $\Gamma^{n+1}$ is a graph on $d_1$ vertices with a half-edge at each vertex.
If the above conditions are satisfied, the moduli space $M^{n+1}$ consists of
a single point which parameterizes a map of $d_1$ disconnected rational curves to $F_k$, each with a fixed condition of multiplicity 1, mapping with degree 1 to the unique curve in the class $(1,0)$ passing through that fixed relative condition.
As such a map has no automorphisms, we find
$$\int_{[M^{n+1}]} 1 =1. $$

\subsection{Proof of main theorem}

We are now ready to prove Theorem \ref{HirzebruchGW}.
\begin{proof}

We will show that the matrix product
\begin{equation}\label{gfft}
 \big\langle\ v \ | \ \exp\big(t\mathsf{M}_{F_k}(u,Q_1, Q_2)\big) \ | \ w_k\ \big\rangle.
 \end{equation}
is the partition function for Severi degrees of $F_k$.

We prove this by showing that the coefficient of $u^{g-1} Q_1^{d_1} Q_2^{d_2}$ in equation (\ref{gfft}) is equal to $N^\bullet_{g,(d_1,d_2)} \frac{t^n}{n!}$ as computed by Li's degeneration formula.  Fix $g, d_1, d_2$ and hence $n$.  Take the degeneration of $F_k$ outlined at the beginning of this section.  Our analysis of the geometry of the tube components shows that the choice of the sequence of partitions $\eta_i$ in fact determines the sequence $d_2^{(i)}$.  We rearrange the sum in equation (\ref{degenformula}).

\begin{align} \label{degenformulaii}
N^\bullet_{g,(d_1,d_2)}
& = \sum_{\eta_i} \sum_{\Gamma^i}
\left(\int_{[M^{n+1}]} 1 \right) \frac{m(\eta_{n})}{ |\text{Aut}(\eta_n)|} \prod_{i = 1}^{n} \left[ \left(\int_{[M^i]} \text{ev}_1^*(\ppp) \right) \frac{m(\eta_{i-1})}{ |\text{Aut}(\eta_{i-1})|}  \right]
\left(\int_{[M^{0}]} 1 \right)\ .
\end{align}
The first sum is now over all relative conditions $\eta_0, \ldots, \eta_n$, such that the implied degrees $d_2^{(i)}$ satisfy $$d_2^0 + ... + d_2^{n+1}=d_2.$$  The second sum is over all sequences of graphs $\Gamma^i$ which are consistent with the tube geometries studied in Section \ref{tube}.  As $n, d_1,$ and $d_2$ have already been fixed, restricting to such $\Gamma_i$ automatically ensures that the curve formed in the degenerate surface by connecting those in each component connect to form a genus $g$ curve.

To match equation (\ref{degenformulaii}) with the coefficient of $u^{g-1} Q_1^{d_1} Q_2^{d_2}$ in equation (\ref{gfft}), match the two summands of $\mathsf{M_{F_k}}$ with the two configuration types $A$ and $B$ of Section \ref{tube} respectively.  The vector $w_k$ is the sum of all possible relative conditions $\eta_0$ with coefficient $\int_{[M_0]} 1$ and the appropriate combinatorial factor.  We leave the bookkeeping to the reader. 

\end{proof}

\subsection{Geometry of cap $S_0$}

Now we study the geometry of the cap $S_0$ and obtain some constraints on the summands in the vector $w_k$.  We also compute $w_k$ explicitly for the Hirzebruch surface $F_k$ when $k$ is small.  

Consider the component $S_0$.  Let the relative condition be
$$\eta^*_0 = \rho[\uuu] + \lambda [\ppp].$$
where $\rho$ and $\lambda$ are partitions satisfying $|\rho|+|\lambda| = d_1$.

Let $R$ be a component of the domain curve of a map to $S_0$ parameterized by $M^0$, and let
$$\sigma[\uuu] + \tau[\ppp]$$
be the relative condition imposed on $R$.  Suppose the genus of $R$ is $h$ and $f_*[R] = \beta = (s,t)$.  The dimension of the space of such maps with no relative conditions imposed is
$$\mathrm{dim}_\com \ \overline{M}_{h,0}(F_k, \beta)  =  h-1  +  \int_{(s,t)} c_1(T_{F_k})=  h-1 + 2s + (2-k)t.$$
Meanwhile the number of conditions imposed on the map by the relative conditions is
$$\sum (\sigma_i - 1) + \sum \tau_j = (sF + tE) \cdot (C) - \ell(\sigma) = s-\ell(\sigma).$$
After setting the dimension to equal the number of conditions,  we obtain
\begin{equation} \label{CapNumerics}
 h + s + \ell(\sigma) + (2-k) t = 1.
 \end{equation}

The following lemma gives a second constraint on the vector $w_k$.  

\begin{Lemma} \label{CapInequality}
For $k= 2m$ and $2m+1$, the moduli space $\overline{M}_h(F_{k},(aF + bE) \backslash C, \mu,\nu)$ is nonempty only if $a \geq m b$.
\end{Lemma}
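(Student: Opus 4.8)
The plan is to convert nonemptiness into a single numerical inequality about the image cycle. Given a relative stable map $[f\colon R\to F_k]$ in $\overline{M}_h(F_k,(aF+bE)\backslash C,\mu,\nu)$, let $Z=f_*[R]$; it is an effective curve of class $aF+bE$. Since $f$ is relative to $C$, no component of $R$ maps into $C$, so $Z$ does not contain $C$ and $a=Z\cdot C\ge 0$. Writing $m=\lfloor k/2\rfloor$, I would introduce the auxiliary class $L=(k-m)F+E$. Using $F\cdot F=0$, $F\cdot E=1$, $E\cdot E=-k$ one computes
\[
Z\cdot L=(aF+bE)\cdot\bigl((k-m)F+E\bigr)=a-mb,
\]
valid for both $k=2m$ and $k=2m+1$. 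Thus the lemma is precisely the assertion $Z\cdot L\ge 0$.

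Next I would localize the inequality on the irreducible components of $Z$. For the two distinguished classes one has $L\cdot F=1$ and $L\cdot E=-m$, and for any irreducible curve $D\ne E$ the pairing $L\cdot D=(k-m)(D\cdot F)+D\cdot E$ is a sum of two nonnegative terms, since $D\cdot F\ge 0$ and $D\cdot E\ge 0$ for such $D$. Writing the image class of an irreducible component as $sF+tE$, this says $s-mt=L\cdot D\ge 0$ for every component except those mapping onto $E$. Summing over all components with multiplicity, and letting $c$ denote the total degree with which $R$ covers the negative section, we obtain $a\ge m(b-c)$; equivalently, decomposing $Z=cE+\Gamma$ with $\Gamma$ effective and $E$-free, $\Gamma\cdot L\ge 0$ and $Z\cdot L=\Gamma\cdot L-cm$. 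In particular, if $R$ has no component factoring through $E$, then $c=0$ and the bound is immediate --- indeed then $Z\cdot E=a-kb\ge 0$ already yields the stronger $a\ge kb\ge mb$.

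The hard part, and the step I expect to be the main obstacle, is therefore to rule out domain components that map onto the negative section $E$. Such a component maps a curve to $E\cong\proj^1$ with some degree $e\ge 1$, contributing $eE$ to the class; because $E\cdot C=0$ it is disjoint from the relative divisor and carries none of the relative markings, so the class-and-incidence bookkeeping of the previous paragraph cannot detect it. To exclude it I would use the geometry specific to the cap $S_0$ together with the enumerative meaning of the count: the relevant maps arise as limits of honest curves in $|aF+bE|$ meeting $C$, and in the Block--Goettsche problem \cite{BG} one counts only curves that do not contain $E$ as a component. I would accordingly argue that the moduli space entering the definition of $w_k$ is to be taken with stable image free of the negative section, which returns us to the case $c=0$ above and completes the inequality $a-mb=Z\cdot L\ge 0$. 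Everything outside this exclusion is the elementary intersection computation on $F_k$ recorded above; the entire difficulty is the control of covers of $E$.
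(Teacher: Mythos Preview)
Your intersection-theoretic setup is correct up to the point you yourself flag: the auxiliary class $L=(k-m)F+E$ satisfies $L\cdot E=-m$, so it is not nef on $F_k$, and the inequality $Z\cdot L\ge 0$ genuinely fails once the image contains $E$. The problem is that your proposed escape from this difficulty does not work. The moduli space $\overline{M}_h(F_k,(aF+bE)\backslash C,\mu,\nu)$ appearing in the definition of $w_k$ is the ordinary moduli space of relative stable maps; there is no restriction that the image avoid the negative section, and appealing to the Block--G\"ottsche enumerative problem (which concerns $\widetilde{N}$, not these relative invariants) does not impose one. In fact the paper's own computation of $w_3$ exhibits nontrivial contributions from classes $tF+tE$ with $t>0$; since $(tF+tE)\cdot E=-2t<0$, every effective curve in this class on $F_3$ contains $E$, so $E$-components are unavoidable there. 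Thus the case $c>0$ cannot be dismissed, and your argument does not close.

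The paper's proof sidesteps the issue entirely by a different mechanism: deformation invariance of relative Gromov--Witten theory. The even (resp.\ odd) Hirzebruch surfaces are all deformation equivalent, and under the deformation $F_{2m}\rightsquigarrow F_0$ (resp.\ $F_{2m+1}\rightsquigarrow F_1$) the pair $(aF+bE,\,C)$ transports to $\bigl((a-mb)F+bE,\,C+mF\bigr)$. Nonemptiness on $F_k$ therefore forces nonemptiness on $F_0$ or $F_1$, where the effective cone is simply the first quadrant in $(F,E)$; hence $a-mb\ge 0$. The point is that the deformation trades the troublesome $E$-components on $F_k$ for an honest shift in the curve class on $F_0$ or $F_1$, converting the obstruction you could not see into a visible effectivity constraint. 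Your direct approach on $F_k$ cannot reproduce this without some comparable input; as written it has a real gap.
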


\begin{proof}
The even and odd Hirzebruch surfaces are deformation equivalent.  So by the deformation invariance of relative Gromov--Witten theory, 

$$\overline{M}_h(F_{2m},(aF + bE) \backslash C, \mu,\nu) \simeq \overline{M}_h(F_{0},(aF + b(E-mF)) \backslash C+mF, \mu,\nu)$$
and
$$\overline{M}_h(F_{2m+1},(aF + bE) \backslash C, \mu,\nu) \simeq \overline{M}_h(F_{1},(aF + b(E-mF)) \backslash C+mF, \mu,\nu).$$

In both cases, for the moduli space on the right to be nonempty, the curve class must be effective and hence we must have $a-mb \geq 0$.  
\end{proof}

In \cite{CP} we computed $w_0$, and here we are able to compute $w_k$ explicitly for $k=1,2,3$.

\subsubsection{$F_1$}

On $F_1$, the dimension constraint (\ref{CapNumerics}) becomes 
$$h + s + t + \ell(\sigma) = 1.$$

The numerically allowed solutions are

\begin{enumerate}
\item $h=1, s= 0, t =0, \ell(\sigma) = 0$

\item $h= 0, s=1, t=0, \ell(\sigma)= 0$

\item $h=0, s= 0, t= 1, \ell(\sigma) = 0$

\item $h= 0, s=0, t=0, \ell(\sigma)= 1.$
\end{enumerate}

Now we consider which of the above are geometrically possible.  Neither (1) or (4) is, because the curve class vanishes.  The other two cases can appear.

(2)  In this case, $\nu={1}$ and the moduli space on the cap is $$\overline{M}_0(F_1, F \setminus \nu ) \simeq \overline{M}_0(\proj^1, 1) \simeq pt.$$ 

So 
$$\int_{\overline{M}_0(F_1, F \setminus \nu ) }1 = 1.$$

(3)  In this case, $\nu = \emptyset$ and the moduli space on the cap is $$\overline{M}_0(F_1, E \setminus \nu) \simeq \overline{M}_0(\proj^1,1) \simeq pt$$ 

so
$$\int_{\overline{M}_0(F_1, E)} 1 = 1.$$

We conclude $$w_1 = exp\left(Q_1 \alpha_{-1}[\ppp] + \frac{Q_2}{u}\right)v_{\emptyset}. $$

\subsubsection{$F_2$}
For the surface $F_2$, the condition (\ref{CapNumerics}) specializes to 

$$
h+s+\ell(\sigma) = 1.
$$

The numerically allowed solutions which also satisfy Corollary \ref{CapInequality} are:

\begin{enumerate}

\item $h= 0, s=1,t=0, \ell(\sigma)= 0.$ 

\item $h= 0, s=1,t=1, \ell(\sigma)= 0.$ 

\end{enumerate}

Now we analyze these possibilities.

(1) In this case, $\mu = \emptyset , \nu = 1$, and 

$$\int_{\overline{M}_0(F_2, F \backslash C, \{\mu, \nu \} )} 1 =  1.$$  

(2)  Again, $\mu = \emptyset $ and $ \nu= 1$.  We evaluate the integral by transporting the problem to $F_0$.  

 $$\int_{\overline{M}_0(F_2, F + E \backslash C, \{\mu, \nu \} ) } 1 =  \int_{\overline{M}_0(F_0, E \backslash C+F, \{\mu, \nu \} )}  = 1.$$  

We conclude that $$w_2 = exp \Bigl( Q_1 \alpha_{-1}[\uuu] +Q_1 Q_2 \alpha_{-1}[\uuu] \Bigr) v_{\emptyset}$$

\subsubsection{$F_3$}

On $F_3$, the dimension constraint (\ref{CapNumerics}) becomes 

\begin{equation} \label{F3ineq}
h + s - t + \ell(\sigma) = 1.
\end{equation}

There are now infinitely many numerically allowed solutions and we begin by ruling out many of these solutions for geometric reasons. 

First, we apply Lemma (\ref{CapInequality}) to conclude that only solutions where $s \geq t$ must be considered.  Hence if we group terms, equation (\ref{F3ineq}) becomes

$$h + ( s - t)  + \ell(\sigma) = 1$$

and each of the three summands is nonnegative.  Let $r = s-t$.  There are then three cases.  

\begin{enumerate}
\item $h=1, r=0, \ell(\sigma) = 0$

\item $h= 0, r=1, \ell(\sigma)= 0$

\item $h= 0, r=0, \ell(\sigma)= 1.$
\end{enumerate}

Using the same idea as in Lemma (\ref{CapInequality}), we translate the problem to a relative Gromov--Witten calculation on $F_1$.  
\begin{equation} \label{F3sub}
 \overline{M}_{h,0}(F_3, sF+tE \backslash C, \{ \mu, \nu \}) \simeq \overline{M}_{h,0}(F_1, rF + tE \backslash C+F, \{ \mu, \nu \} )
\end{equation}

(1)  In this case, $\ell(\sigma) = 0$ so $\mu = 0$.  Equation \ref{F3sub} becomes

$$ \overline{M}_{h,0}(F_3, tF+tE \backslash C, \{\nu \} ) \simeq \overline{M}_{h,0}(F_1, tE \backslash C+F, \{ \nu \} )$$

But the moduli space on the right hand side is empty, as the curve class is a multiple of $E$ and there's no way for a curve in that class to satisfy fixed relative conditions on $C+F$.  So there is no contribution from this case.

(2)  In this case equation (\ref{F3sub}) becomes
$$ \overline{M}_{h,0}(F_3, t F+ (t-1) E \backslash C, \{ \nu \}) \simeq \overline{M}_{h,0}(F_1, F + tE \backslash C+F, \{ \nu \}).$$

Any curve on $F_1$ in the class $F+tE$ must have at least one component isomorphic to $E$ as soon as $t \geq 2$.  But since all the relative conditions are fixed, there is no way for the curve to meet the relative curve $C+F$ as required.  Hence the moduli space on the right hand side is empty if $t \geq 2$. 

That leaves two cases, either $t=0$ or $t=1$.  

In the case $t=0$, $\nu = 1$.  
The moduli space $\overline{M}_{h,0}(F_3, F \backslash C, \{ \nu \} ) \simeq pt$, hence we get a contribution of  $Q_1 \alpha_{-1}[\uuu]$ .

In the case $t=1$, the contribution is nonzero only if $\nu$ is either $(1,1)$ or $(2)$.

If $\nu = (1,1)$, $\int_{\overline{M}_0 ( F_1(2,1 \backslash \nu = \{1,1\} ) } 1 = 1$ so the contribution is $\frac{1}{2} Q_1^2 Q_2 u \alpha_{-1}^2[\uuu] $

If $\nu =(2)$, $\int_{\overline{M}_0 ( F_1(2,1) \backslash \nu = \{2\})} 1 = 2$ so the contribution is $ Q_1^2 Q_2 \alpha_{-2}[\uuu] $

(3)  Now equation (\ref{F3sub}) becomes

$$ \overline{M}_{h,0}(F_3, tF+tE \backslash C ,\{ \mu, \nu \} ) \simeq \overline{M}_{h,0}(F_1, tE \backslash C+F, \{ \mu, \nu\} )$$
where $\mu$ is a partition with one part.

The moduli space on the right hand side is empty if $\nu$ has any parts.  However, it can be nonempty if $\nu=0$ and $\mu = t$ has a single part.  In this case, this space consists of maps that are degree $m$ map from $\proj^1$ to $E$, totally ramified where $E$ intersects the relative divisor $C+F$.  

The integral we must compute is 
$$\int_{[\overline{M}_{h,0}(F_1, tE \backslash \mu = t )]^{vir}} 1 = \int_{\overline{M}_{h,0} ( \proj^1, t \backslash pt, \{ \mu = t \}) } c_{top}( H^1(f^* (O(-1))) ).$$

This integral was computed by Bryan and Pandharipande in \cite{BP} to be $ \frac{(-1)^{t-1}}{t^2}.$ 

We conclude that the contribution in this case is

$$\sum_{a > 0} \frac{(-1)^{a-1}}{a^2} Q_1^a Q_2^a \alpha_{-a}[\ppp].$$

We conclude
$$w_3 = exp\left( Q_1 \alpha_{-1}[\uuu] + \frac{1}{2} Q_1^2 Q_2 u \alpha_{-1}^2[\uuu] +  Q_1^2 Q_2 \alpha_{-2}[\uuu]  + \sum_{a > 0} \frac{(-1)^{a-1}}{a^2} Q_1^a Q_2^a \alpha_{-a}[\ppp] \right) v_{\emptyset}.$$

\subsection{Proof of Theorem \ref{relHirzebruchGW}}

The proof of Theorem \ref{HirzebruchGW}, with some modification, yields a proof also of Theorem \ref{relHirzebruchGW}.

\begin{proof}

The relative Gromov--Witten invariants of $F_k$ can also be computed via the degeneration formula.  To compute $N_{g,(d_1,d_2)}^{\bullet} (\mu_C, \nu_C, \mu_E, \nu_E)$, we again degenerate to the normal cone of $E$ multiple times.  This time we create a singular surface with $n$ components, all isomorphic to $F_k$, where $n = g-1+2d_1+(2-k)d_2 - (|\mu|+|\nu| - \ell(\nu))$.  We place one point condition in each component.  The degeneration formula now gives the following expression.

\begin{align}
N_{g,(d_1,d_2)}^{\bullet} (\mu_C, \nu_C, \mu_E, \nu_E) & = \sum_{d^i_2,\eta_i,\Gamma^i}
\frac{m(\eta_{n})}{ |\text{Aut}(\eta_n)|} \prod_{i = 1}^{n} \left[ \left(\int_{[M^i]} \text{ev}_1^*(\ppp) \right) \frac{m(\eta_{i-1})}{ |\text{Aut}(\eta_{i-1})|}  \right]
 \ .
\end{align}
The sum is over all degree splittings $$d_2^0 + \cdot + d_2^{n+1}=d_2,$$
relative conditions $\eta_0, \ldots, \eta_n$, and compatible
graph types $\Gamma^1,\ldots \Gamma^{n}$ which connect to form a genus $g$ curve.
The relative conditions $\eta_0 = \mu_E[\uuu] + \nu_E[\ppp] $ and $\eta_n = \nu_C[\uuu]+\mu_C[\ppp] $ are fixed by the problem.
On the right side, $[M^i]$ denotes the virtual fundamental class of the moduli
space $M^i$.

Now we have no cap components, only tube components, and the relative conditions on the first and last component are fixed.  So the analysis proceeds as in the proof of Theorem \ref{HirzebruchGW}.  The vector on the right and left of the expression are determined by the problem, and the geometry of the tube components contributes $exp(t \mathsf{M}_{F_k})$ as before.

\end{proof}

\section{Derivation of formulas for $\proj^2$}
To prove this theorem, we proceed analogously to the proof of Theorem \ref{HirzebruchGW}.  
\subsection{Degeneration}

We will compute $N_{g,d}^{\bullet}$ by applying the degeneration formula to the following degeneration.  Start with the surface $\proj^2$.  Now degenerate to the normal cone of a line $L$ in $\proj^2$.  The resulting surface will be $F_1 \bigcup \proj^2$, with the two components meeting along $L$ in $\proj^2$ and $E$ in $F_1$.  Next, degenerate to the normal cone of a curve in $F_1$ in the class $C$.  The resulting surface will be $F_1 \bigcup F_1 \bigcup \proj^2 $, where the first two surfaces meet as before, and the $F_1$ and $F_1$ meet along $C$ in the first $F_1$, $E$ in the second.  Repeat this step $n-1$ times more to obtain a chain of $n+2$ surfaces, the first surface $S_0$ isomorphic to $\proj^2$, and the remaining surfaces each isomorphic to $F_1$.  This is the degeneration of $\proj^2$ we will use, and we distribute the original $n$ point conditions by placing one on each of the middle $n$ components.

\subsection{Tube components}
The geometry of the middle $n+1$ components is exactly the same as for the degeneration of $F_1$ we considered earlier, and the contribution to the formula will be the same.

\subsection{Cap components}
The cap $S_{n+1}$ for $\proj^2$ is isomorphic to $F_1$ and the geometry of maps to it is exactly the same as in the analysis for $F_1$.  The cap $S_0$ however is different.  It is isomorphic to $\proj^2$, and given the degeneration and distribution of point conditions we have chosen, the moduli space of maps to this cap is empty.  

\subsection{Putting the pieces together}

\begin{proof}[Proof of Theorem \ref{P2}]
The degeneration we've chosen for $\proj^2$ is very similar to the one we used for $F_1$.  They differ only in the cap $S_0$, so the formula for $\proj^2$ is the same as that for $F_1$ except for the vector on the right side, which encodes the contribution of $S_0$.  Because the space of maps to this cap is empty, the vector $w$ in the formula for $F_1$ is replaced by $v_\emptyset$ here.
\end{proof}

\begin{proof}[Proof of Corollary \ref{relP2}]
We modify the previous proof by taking the degeneration of $\proj^2$ to a union of $n = 3d+g-1- (|\mu|+|\nu| - \ell(\nu))$ surfaces isomorphic to $F_1$ and a cap $S_0$ isomorphic to $\proj^2$, and place a point condition in each component isomorphic to $F_1$.  The geometry of the components in the resulting surfaces are the same as before, except the cap $S_{n+1}$ has been removed, and instead we have a prescribed relative condition $\eta_n = \mu[\ppp]+\nu[\uuu]$ on the last component $S_n$.  This results in the replacement of the vector $v$ in the formula for the generating function of Gromov--Witten invariants with the vector $| \mu,\nu \rangle$.
\end{proof}

\section{Adjoint presentation of formulas}

Note that with the inner product (\ref{inner_prod}),

\begin{eqnarray*}
\alpha_i[\ppp]^{\dagger} = u \alpha_{-i}[\ppp], \hspace{1in} \alpha_{-i}[\ppp]^{\dagger} = u^{-1} \alpha_{i}[\ppp]\\
\alpha_i[\uuu]^{\dagger} = u \alpha_{-i}[\uuu], \hspace{1in} \alpha_{-i}[\uuu]^{\dagger} = u^{-1} \alpha_{i}[\uuu]\\
\end{eqnarray*}

Hence 
\begin{eqnarray*}
\alpha_\nu[\ppp]^{\dagger} = u^{\ell(\nu)} \alpha_{-\nu}[\ppp], \hspace{1in} \alpha_{-\nu}[\ppp]^{\dagger} = u^{-\ell(\nu)} \alpha_{\nu}[\ppp]\\
\alpha_\mu[\uuu]^{\dagger} = u^{\ell(\mu)} \alpha_{-\mu}[\uuu] , \hspace{1in}  \alpha_{-\mu}[\uuu]^{\dagger} = u^{-\ell(\mu)} \alpha_{\mu}[\uuu]\\
\end{eqnarray*}

and

$$\MM_{F_k}(u,Q_1, Q_2)^{\dagger} = \sum_{m>0} \alpha_{-m}[\ppp] \alpha_m[\ppp]
+ Q_1^k Q_2  \sum_{|\nu|-k =|\mu| \geq 0} u^{{\ell(\mu)}-1} \alpha_{-\mu}[\uuu]\alpha_\nu[\uuu]\ \ .$$
The second sum is over all pairs of partitions $\mu$ and $\nu$ whose size differ by $k$.

We can equivalently write the formulas in this paper in terms of the adjoint $\MM_{F_k}(u,Q_1, Q_2)^{\dagger}$.  For example, 

\begin{Corollary} \label{AdjHirzebruchGW}
$\mathsf{Z^{F_k}} (u, Q_1, Q_2,  t) = \big\langle\ w_k \ | \  \exp\big(t\mathsf{M}_{F_k}(u,Q_1, Q_2)^{\dagger} \big) \ | \ v \ \big\rangle.$
\end{Corollary}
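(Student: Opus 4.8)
The plan is to deduce Corollary \ref{AdjHirzebruchGW} directly from Theorem \ref{HirzebruchGW} by transposing the matrix element, using that the pairing (\ref{inner_prod}) is a \emph{symmetric} bilinear form and that $\dagger$ behaves well under exponentiation. No new geometry is needed; the argument is purely formal on the Fock space $\cF[\proj^1]$.

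First I would record that the pairing (\ref{inner_prod}) is symmetric: swapping the two basis vectors $|\mu,\nu\rangle$ and $|\mu',\nu'\rangle$ leaves the Kronecker deltas $\delta_{\mu\nu'}\delta_{\nu\mu'}$ unchanged, and on their support (where $\mu=\nu'$ and $\nu=\mu'$) the scalar prefactor $u^{-\ell(\mu)}u^{-\ell(\nu)}/(\zz(\mu)\zz(\nu))$ is also invariant. Hence $\langle x\,|\,y\rangle=\langle y\,|\,x\rangle$ for all $x,y\in\cF[\proj^1]$. Combined with the defining property of the adjoint, $\langle Ax\,|\,y\rangle=\langle x\,|\,A^\dagger y\rangle$, this yields the transposition identity
\[
\langle x\,|\,A\,|\,y\rangle \;=\; \langle y\,|\,A^\dagger\,|\,x\rangle,
\]
valid for every operator $A$ and all vectors $x,y$; indeed $\langle x\,|\,Ay\rangle=\langle Ay\,|\,x\rangle=\langle y\,|\,A^\dagger x\rangle$.

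Next I would check that taking $\dagger$ commutes with the exponential. Because $\dagger$ is an algebra anti-automorphism, $(A^n)^\dagger=(A^\dagger)^n$ for all $n$, and since $\dagger$ is linear over the central formal variables $u,Q_1,Q_2,t$, applying it term by term to the defining series gives $(\exp(tA))^\dagger=\exp(tA^\dagger)$. The required operator $\MM_{F_k}^\dagger$ is exactly the one displayed just above the corollary, obtained from the elementary adjoint relations $\alpha_i[\ppp]^\dagger=u\,\alpha_{-i}[\ppp]$, $\alpha_i[\uuu]^\dagger=u\,\alpha_{-i}[\uuu]$, and so on.

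Finally I would assemble the pieces: setting $x=v$, $y=w_k$, and $A=\exp(t\MM_{F_k})$, the transposition identity together with $(\exp(t\MM_{F_k}))^\dagger=\exp(t\MM_{F_k}^\dagger)$ gives
\[
\langle v\,|\,\exp(t\MM_{F_k})\,|\,w_k\rangle \;=\; \langle w_k\,|\,\exp(t\MM_{F_k}^\dagger)\,|\,v\rangle,
\]
and the left-hand side equals $\mathsf{Z}^{F_k}$ by Theorem \ref{HirzebruchGW}. The only point demanding care --- and the closest thing to an obstacle --- is confirming that the operator-by-operator adjoints, with their $u$-weights, genuinely compute the adjoint for the bilinear form (\ref{inner_prod}); this is a short verification on the basis $|\mu,\nu\rangle$, after which the corollary is immediate.
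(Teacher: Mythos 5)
Your proposal is correct and is essentially the paper's own justification made explicit: the paper records the adjoint relations $\alpha_i[\ppp]^{\dagger}=u\,\alpha_{-i}[\ppp]$, $\alpha_i[\uuu]^{\dagger}=u\,\alpha_{-i}[\uuu]$ (and their inverses), computes $\mathsf{M}_{F_k}^{\dagger}$, and then asserts the adjoint form as an equivalent rewriting of Theorem \ref{HirzebruchGW}, which is exactly your formal transposition argument via symmetry of the pairing and $\big(\exp(t\mathsf{M}_{F_k})\big)^{\dagger}=\exp\big(t\mathsf{M}_{F_k}^{\dagger}\big)$. The paper also remarks that the formula could alternatively be obtained geometrically by running the degeneration of Figure 1 in the opposite order, but that is offered as a second route, not the primary one.
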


\begin{Corollary} \label{AdjP2}
$\mathsf{Z^{\proj^2}} (u, Q,  t) =\big\langle\ v_{\emptyset}\ | \ \exp\big(t\mathsf{M}_{F_1}(u,Q_1, Q_2)^{\dagger}\big) \ | v \ \big\rangle.$
\end{Corollary}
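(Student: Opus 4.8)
The plan is to deduce Corollary \ref{AdjP2} directly from Theorem \ref{P2} by transposing the matrix element, using only the symmetry of the inner product together with the fact that $\dagger$ is the adjoint for that form. No new geometry is needed, so this is essentially a formal manipulation on $\cF[\proj^1]$.

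First I would check that the pairing $\langle\,\cdot\mid\cdot\,\rangle$ defined earlier is symmetric. On basis vectors $\langle \mu,\nu \mid \mu',\nu'\rangle$ is nonzero only when $\mu=\nu'$ and $\nu=\mu'$, and in that case the prefactor $\tfrac{u^{-\ell(\mu)}}{\zz(\mu)}\tfrac{u^{-\ell(\nu)}}{\zz(\nu)}$ is unchanged under the simultaneous swap $(\mu,\nu)\leftrightarrow(\mu',\nu')$; hence $\langle x\mid y\rangle=\langle y\mid x\rangle$ for all $x,y$. Next, the relations recorded at the start of Section 4 say exactly that $\dagger$ is the adjoint with respect to this form, i.e. $\langle Ax\mid y\rangle=\langle x\mid A^{\dagger}y\rangle$. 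Combining the two gives the transposition identity
$$\langle x \mid A \mid y\rangle = \langle y \mid A^{\dagger} \mid x\rangle$$
for every operator $A$ and all $x,y$. Since $\dagger$ reverses products, $(AB)^{\dagger}=B^{\dagger}A^{\dagger}$, it sends the $n$th power of a single operator to the $n$th power of its adjoint, $(A^n)^{\dagger}=(A^{\dagger})^n$, and therefore, applying this term by term to the defining series, $\exp(tA)^{\dagger}=\exp(tA^{\dagger})$.

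Finally I would apply the transposition identity to Theorem \ref{P2} with $A=\exp\bigl(t\mathsf{M}_{F_1}\bigr)$, $x=v$, and $y=v_{\emptyset}$, obtaining
$$\mathsf{Z}^{\proj^2}(u,Q,t) = \langle v \mid \exp(t\mathsf{M}_{F_1}) \mid v_{\emptyset}\rangle = \langle v_{\emptyset} \mid \exp(t\mathsf{M}_{F_1})^{\dagger} \mid v\rangle = \langle v_{\emptyset} \mid \exp(t\mathsf{M}_{F_1}^{\dagger}) \mid v\rangle,$$
which is precisely the asserted formula (with the same specialization of the class variable used in Theorem \ref{P2}). The nearest thing to an obstacle here is purely the adjoint bookkeeping: one must confirm that the form is genuinely symmetric, so that bra and ket may be exchanged, and that the anti-homomorphism property of $\dagger$ causes no trouble for the exponential. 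The latter is automatic because $\mathsf{M}_{F_1}$ is a single operator, so reversal of factors in $(\mathsf{M}_{F_1}^n)^{\dagger}$ is vacuous and its powers transpose cleanly. All identities are interpreted as equalities of formal series in $t$ and $Q$, so no convergence issue arises.
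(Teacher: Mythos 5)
Your proposal is correct and takes essentially the same route as the paper: Section 4 derives exactly the adjoint relations you invoke (showing in particular that $\mathsf{M}_{F_k}^{\dagger}$ has the stated form) and obtains Corollary \ref{AdjP2} as a formal transposition of Theorem \ref{P2}, with the symmetry of the bilinear form and the identity $\exp(tA)^{\dagger}=\exp\bigl(tA^{\dagger}\bigr)$ left implicit where you spell them out. The paper's remark that one could instead rerun the degeneration of Figure 1 in the opposite order is offered only as an alternative, not as its proof.
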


We could also have obtained these formulas by starting with the degeneration of Figure \ref{DegenerationPic} in the opposite order and analyzing the geometries of the components as we did for the degeneration we studied.

\section{Expression of $M_{F_k}$ in terms of fields}

Our operators can also be written in a form which highlights their relationship to vertex operators.  We define the following power series of operators (sometimes called fields).

\begin{align*}
A_+(z) := \sum_{i>0} \alpha_i[p] z^i, \qquad A_-(z) := \sum_{i<0} \alpha_i[p] z^i
\end{align*}

$$A(z, u) := A_+(z) + u A_-(z) $$

\begin{align*}
B_+(z, Q_1) := \sum_{i>0} \alpha_i[1] Q_1^{-i} z^i , \qquad B_-(z, Q_1) := \sum_{i<0} \alpha_i[1] Q_1^{-i} z^i
\end{align*}

$$B(z, u, Q_1) := B_+(z) + u B_-(z)$$

Then 
$$\mathsf{M}_{F_k}(u,Q_1, Q_2) :=    \frac{1}{2} u^{-1} A^2(z, u) \bigg|_{z^0} + Q_2 u^{-1} exp(B(z, u, Q_1)) \bigg|_{z^{-k}}  $$

\section{Relation to other work}

\subsection{Caporaso--Harris}
The formula given in Corollary \ref{AdjP2} is closely related to the recursion discovered by Caporaso and Harris.  Let's compare the calculation of $N_{d,g}^{\bullet}$ using Corollary \ref{AdjP2} and using Caporaso--Harris' recursion.  

The calculation of $N_{d,g}^{\bullet}$ (rather than the entire generating function) in Theorem 2 is obtained by taking the single term 

\begin{equation}
\big\langle\ v_{\emptyset}\ | (\mathsf{M}_{F_1}(u,Q_1, Q_2)^{\dagger})^n \ | 1^d, \emptyset \ \big\rangle.
\end{equation}

This term was obtained by applying the degeneration formula to the degeneration of $\proj^2$ into $\proj^2$ union $n$ copies of $F_1$, as in Figure \ref{DegenerationPic}, where the cap $S_{n+1}$ is isomorphic to $\proj^2$ and meets the rest of the degenerate surface along a line $L$.  

In Caporaso--Harris' calculation, they consider degree $d$ plane curves and their degenerations as the point conditions are moved onto this line $L$.  Their notation for beginning with the trivial relative condition on this line is $(d, \delta, 0, d)$, where $\delta = {{d-1}\choose {2}} - g$.  

Starting with the trivial relative condition on $S_{0}$, which corresponds to the vector $v$ on the right in Theorem \ref{AdjP2}, we apply the operator $\MM^{\dagger}_{F_1}$ $n$ times.  Via the degeneration formula this corresponds geometrically to producing all possible combinatorial types describing broken curves mapping to the very degenerate surface, assigning a multiplicity to each which is the product of numbers associated to each component, and summing the multiplicities over all possible combinatorial types.

One can also consider the recursion of Caporaso--Harris as generating all combinatorial types and associating to each a multiplicity which is the product of the number produced at each step of the recursion.  There is a bijection between our combinatorial types and those of Caporaso--Harris, and they contribute the same multiplicity to the sum.  

There is a difference in how these multiplicities are computed in the two approachs due to different treatments of the labeling of the partitions that denote the relative conditions.  However this difference in combinatorial factors cancels when the contributions from all components are multiplied.  The two summands of $M_{F_1}$ correspond to the two terms in Caporaso--Harris' recursion, namely $\alpha_{-k}[p] \alpha_{k}[p]$ corresponds to the $k$ in their formula, and $\alpha_{-\mu}[\uuu]\alpha_\nu[\uuu]$ corresponds to $I^{\beta' - \beta}$.  

\subsection{Differential Equation of Getzler and Vakil}

We can give another expression of the result of Theorem \ref{relHirzebruchGW}.  Consider $\alpha_{-i}[\uuu]$ and $\alpha_{-i}[\ppp]$ as formal variables.  Then $Y_C$ defined below can be interpreted as the generating function for relative Gromov--Witten invariants with non-trivial relative conditions imposed only along the divisor $C$.

\begin{align*}
Y_C(u,Q_1,Q_2,t,\balpha[\uuu],\balpha[\ppp]) 
&= \exp\big(t\mathsf{M}_{F_k}(u,Q_1, Q_2)\big) \ | \ w_0 \ \big\rangle \\
&= \sum_{g\in \mathbb{Z}} u^{g-1} \sum_{(d_1,d_2)} \  N_{g,(d_1,d_2)}^{\bullet} (\mu_C, \nu_C) \frac{t^n}{n!} \ Q_1^{d_1} Q_2^{d_2} \alpha^{\mu_C}[\uuu] \alpha^{\nu_C}[\ppp],
\end{align*}
where the second equality is implied by Theorem \ref{relHirzebruchGW}.

From the form of the expression defining $Y_C$, it is clear that $Y_C$ satisfies the following differential equation:

\begin{Lemma}\label{CoopDiffC}
$$\frac{\partial}{\partial t} Y_C (u, Q_1, Q_2,  t, \balpha[\uuu],\balpha[\ppp]) = \mathsf{M}_{F_k}(u,Q_1, Q_2)  Y_C (u, Q_1, Q_2,  t, \balpha[\uuu],\balpha[\ppp]).$$
\end{Lemma}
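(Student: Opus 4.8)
The plan is to read the identity entirely inside the Fock space $\cF[\proj^1]$ and then transport it through the dictionary that turns Fock vectors into generating functions in the formal variables $\balpha[\uuu], \balpha[\ppp]$. First I would make precise the two meanings of $Y_C$. Let $\Phi \colon \cF[\proj^1] \to \Q[[\balpha[\uuu],\balpha[\ppp]]]$ be the linear map that reads off the coordinates of a Fock vector in the monomial basis $\prod_i \alpha_{-\mu_i}[\uuu]\prod_j \alpha_{-\nu_j}[\ppp]\vac$ and records them as the corresponding commutative monomial in the symbols $\alpha_{-i}[\uuu], \alpha_{-i}[\ppp]$ now treated as formal variables; since the creation operators commute, $\Phi$ is a well-defined linear isomorphism onto the polynomial algebra. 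By the first line of its definition — equivalently, by Theorem \ref{relHirzebruchGW} applied with relative conditions only along $C$ (the $E$-side producing the vector $w_0$), which is exactly the second equality in the definition of $Y_C$ — the generating function $Y_C$ is precisely $\Phi$ applied to the Fock vector $v(t) := \exp\!\big(t\MM_{F_k}(u,Q_1,Q_2)\big)\,w_0$. Thus the whole statement is the $\Phi$-image of a statement about the one-parameter family $v(t)$.

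Next I would fix the meaning of the right-hand side, which is the point the bare operator computation glosses over. When $Y_C$ is regarded as a generating function, the symbol $\MM_{F_k}\,Y_C$ must be the $\Phi$-transport of the operator $\MM_{F_k}$: using the commutation relations \eqref{Hp1commutation}, the creation operators $\alpha_{-i}[\uuu], \alpha_{-i}[\ppp]$ act by multiplication by the matching formal variable, while the annihilation operators act as the scaled derivations $\alpha_i[\ppp] \mapsto i\,\partial/\partial\big(\alpha_{-i}[\uuu]\big)$ and $\alpha_i[\uuu]\mapsto i\,\partial/\partial\big(\alpha_{-i}[\ppp]\big)$. This produces a variable-coefficient differential operator $\widehat{\MM}_{F_k}$ on the polynomial algebra, and by construction $\Phi\circ\MM_{F_k} = \widehat{\MM}_{F_k}\circ\Phi$, i.e. $\Phi$ intertwines the Fock action with the differential-operator action. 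I would verify the intertwining summand by summand against the definition of $\MM_{F_k}$: the first summand becomes $\sum_{i>0} i\,\alpha_{-i}[\ppp]\,\partial/\partial\big(\alpha_{-i}[\uuu]\big)$, and the second becomes multiplication by the $\alpha_{-\mu}[\uuu]$-monomials composed with the differential operators coming from $\alpha_\nu[\uuu]$ in the $\ppp$-variables; each is a finite check on a monomial.

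The core step is then the linear ODE for the operator exponential. Since $\MM_{F_k}$ and $w_0$ do not depend on $t$, the family $v(t)$ satisfies $\tfrac{d}{dt}v(t) = \MM_{F_k}\,v(t)$ with $v(0)=w_0$; applying $\Phi$ and the intertwining relation yields $\partial_t Y_C = \widehat{\MM}_{F_k}\,Y_C$, which is the asserted identity once $\MM_{F_k}$ on the right is read as $\widehat{\MM}_{F_k}$. The step I expect to be the main, though mild, obstacle is legitimating both the differentiation and the intertwining across the infinite sums hidden in $\exp(t\MM_{F_k})$ and in $\MM_{F_k}$ itself, so that none of these manipulations is merely formal. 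I would dispatch this through the multigrading: for fixed $(g,d_1,d_2)$ and fixed relative data $(\mu_C,\nu_C)$ the exponent $n$ in $t^n/n!$ is determined, so each coefficient of $u^{g-1}Q_1^{d_1}Q_2^{d_2}\alpha^{\mu_C}[\uuu]\alpha^{\nu_C}[\ppp]$ in $Y_C$ is a single power of $t$, and $\partial_t$ therefore acts termwise with no analytic issue. Likewise, acting on a fixed monomial the derivation part of $\widehat{\MM}_{F_k}$ involves only the finitely many indices present, and the constraint $|\mu|=|\nu|+k$ leaves only finitely many $\mu$ of fixed size, so $\widehat{\MM}_{F_k}$ is well defined with each output a finite sum. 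Every graded piece of $\partial_t Y_C = \widehat{\MM}_{F_k}Y_C$ is then a finite identity, reducing the lemma to the elementary ODE in each graded piece and establishing the result.
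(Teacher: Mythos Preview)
Your proposal is correct and follows essentially the same approach as the paper: the entire argument reduces to the identity $\partial_t\exp(t\MM_{F_k})\,w_0 = \MM_{F_k}\exp(t\MM_{F_k})\,w_0$, which is exactly the one line the paper offers as proof. The additional work you do---constructing the intertwining map $\Phi$, interpreting $\MM_{F_k}$ as a differential operator on the generating function side, and invoking the multigrading to ensure every manipulation is a finite identity---supplies rigor that the paper leaves implicit when it says ``consider $\alpha_{-i}[\uuu]$ and $\alpha_{-i}[\ppp]$ as formal variables'' and ``it is clear,'' but it does not change the strategy.
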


\begin{proof}
$$\frac{\partial}{\partial t} \exp\big(t\mathsf{M}_{F_k}(u,Q_1, Q_2)\big) \ | \ w_0 \ \big\rangle = \mathsf{M}_{F_k}(u,Q_1, Q_2)  \exp\big(t\mathsf{M}_{F_k}(u,Q_1, Q_2)\big) \ | \ w_0\ \big\rangle .$$
\end{proof}

Similarly, we can define 
\begin{align*}
Y_E(u,Q_1,Q_2,t,\balpha[\uuu],\balpha[\ppp]) 
&= \exp\big(t\mathsf{M'}_{F_k}(u,Q_1, Q_2)^{\dagger}\big) \ | \ w_0 \ \big\rangle \\
&= \sum_{g\in \mathbb{Z}} u^{g-1} \sum_{(d_1,d_2)} \  N_{g,(d_1,d_2)}^{\bullet} (\mu_E, \nu_E) \frac{t^n}{n!} \ Q_1^{d_1} Q_2^{d_2} \alpha^{\mu_E}[\uuu] \alpha^{\nu_E}[\ppp] 
\end{align*}

where 
$$\mathsf{M'}_{F_k}(u,Q_1, Q_2) = \sum_{m>0} \alpha_{-m}[\ppp] \alpha_m[\ppp]
+  Q_2  \sum_{|\mu|-n =|\nu| \geq 0} u^{({\ell(\mu)}-1)} \alpha_{-\mu}[\uuu]\alpha_\nu[\uuu]\ \ .$$ 

The second equality is implied by Theorem \ref{relHirzebruchGW} after the formula is expressed in its adjoint form.

From the form of the expression defining $Y_E$, it is clear that $Y_E$ satisfies the following differential equation:

\begin{Lemma}\label{CoopDiffE}
$$
\frac{\partial}{\partial t} Y_E (u, Q_1, Q_2,  t, \balpha[\uuu],\balpha[\ppp]) = \mathsf{M'}_{F_k}(u,Q_1, Q_2)^\dagger  Y_E (u, Q_1, Q_2,  t, \balpha[\uuu],\balpha[\ppp])
$$
\end{Lemma}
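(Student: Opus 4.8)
The plan is to argue exactly as in the proof of Lemma \ref{CoopDiffC}, now carrying the adjoint operator through. By definition $Y_E$ is the vector
\[
Y_E(u,Q_1,Q_2,t,\balpha[\uuu],\balpha[\ppp]) = \exp\big(t\,\mathsf{M'}_{F_k}(u,Q_1,Q_2)^{\dagger}\big)\,|\,w_0\,\rangle,
\]
so the entire $t$-dependence of $Y_E$ is carried by the single exponential factor; the vector $|\,w_0\,\rangle$ and the operator $\mathsf{M'}_{F_k}(u,Q_1,Q_2)^{\dagger}$ themselves do not depend on $t$. The idea is simply to differentiate this exponential expression with respect to $t$ and observe that the prefactor $\mathsf{M'}_{F_k}^{\dagger}$ drops out.

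Concretely, I would treat $\exp\big(t\,\mathsf{M'}_{F_k}^{\dagger}\big)$ as a formal power series $\sum_{n\ge 0}\frac{t^n}{n!}(\mathsf{M'}_{F_k}^{\dagger})^n$ in $t$ with operator coefficients. This is legitimate because each application of $\mathsf{M'}_{F_k}^{\dagger}$ to a fixed basis vector produces a finite combination of basis vectors within each fixed $(Q_1,Q_2,u)$-degree, so the coefficient of any monomial is a finite sum. Differentiating the series termwise in $t$ and reindexing gives
\[
\frac{\partial}{\partial t}\exp\big(t\,\mathsf{M'}_{F_k}^{\dagger}\big) = \mathsf{M'}_{F_k}^{\dagger}\,\exp\big(t\,\mathsf{M'}_{F_k}^{\dagger}\big),
\]
where I use that $\mathsf{M'}_{F_k}^{\dagger}$ commutes with every power of itself. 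Applying both sides to the $t$-independent vector $|\,w_0\,\rangle$ yields the claimed identity.

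There is essentially no obstacle here: the statement is the formal adjoint analog of Lemma \ref{CoopDiffC} and reduces to the elementary fact that $\frac{\partial}{\partial t}\exp(tA)=A\exp(tA)$ for an operator $A$ independent of $t$. The only point worth recording is that the two expressions displayed for $Y_E$ agree, but that equality is precisely the adjoint form of Theorem \ref{relHirzebruchGW}, which I am free to assume. Once that agreement is in hand, the differential equation is read off directly from the exponential expression, with no appeal to the geometric (second) description of $Y_E$ needed.
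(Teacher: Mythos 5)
Your proposal is correct and follows essentially the same route as the paper: the paper's proof is exactly the one-line differentiation $\frac{\partial}{\partial t}\exp\big(t\,\mathsf{M'}_{F_k}^{\dagger}\big)\,|\,w_0\,\rangle = \mathsf{M'}_{F_k}^{\dagger}\exp\big(t\,\mathsf{M'}_{F_k}^{\dagger}\big)\,|\,w_0\,\rangle$, with the identification of $Y_E$ with the exponential expression supplied (as you note) by the adjoint form of Theorem \ref{relHirzebruchGW}. Your added remarks on termwise differentiation of the formal power series and finiteness of coefficients merely make explicit what the paper leaves implicit.
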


\begin{proof}
$$
\frac{\partial}{\partial t} \exp\big(t\mathsf{M'}_{F_k}(u,Q_1, Q_2)^{\dagger}\big) \ | \ w_0 \ \big\rangle  = \mathsf{M'}_{F_k}(u,Q_1, Q_2)^\dagger  \exp\big(t\mathsf{M'}_{F_k}(u,Q_1, Q_2)^{\dagger}\big) \ | \ w_0 \ \big\rangle.$$

\end{proof}
If we rewrite Lemma \ref{CoopDiffE} in terms of the familiar commutation relation $[\frac{d}{dx},x] = 1$, we recover differential equations satisfied by Severi degrees stated by Getzler and Vakil in a different setting.

The algebra $\H [ \proj^1]$  is isomorphic to the algebra generated by $x_i, y_i, i \frac{\partial}{\partial x_i}, i \frac{\partial}{\partial y_i}$ with the usual commutation relations. 
The isomorphism is given by

\begin{align*}
\alpha_{i}[\ppp] \rightarrow i \frac{\partial}{\partial x_i}, \hspace{1in} \alpha_{-i}[\uuu] \rightarrow x_i\\
\alpha_i[\uuu] \rightarrow i \frac{\partial}{\partial y_i}, \hspace{1in} \alpha_{-i}[\ppp] \rightarrow y_i\\
\end{align*}

The commutation relations match: 

\begin{align*}
[\alpha_{i}[\ppp],\alpha_{-i}[\uuu]] = i, \hspace{1in} [\alpha_i[\uuu], \alpha_{-i}[\ppp] ] =  i\\
\end{align*}

becomes

\begin{align*}
[i \frac{\partial}{\partial x_i}, x_i] = i, \hspace{1in} [i \frac{\partial}{\partial y_i}, y_i] = i.
\end{align*}

Under this isomorphism, $Y_E$ becomes 

$$Y_E (u, Q_1, Q_2,  t, \bx,\by) = 1+\sum_{g\in \mathbb{Z}} u^{g-1} \sum_{(d_1,d_2, \mu, \nu)} \  N_{g,(d_1,d_2)}^{\bullet}(\mu_E, \nu_E) \frac{t^n}{n!} Q_1^{d_1} Q_2^{d_2} \bx^{\bmu_E} \by^{\bnu_E},$$

and $\mathsf{M'}_{F_k}(u,Q_1, Q_2)^\dagger$ becomes

$$ \sum_{i} i y_i \frac{\partial}{\partial x_i} + Q_2 u^{-1} exp\left(\sum_{i>0} i u z^i \frac{\partial}{\partial y_i}+  z^{-i} x_i \right) \bigg|_{z^{-k}}.$$

In \cite{G} Getzler reinterprets Caporaso--Harris recursion for $\proj^2$ as a differential equation satisfied by a generating function of all counts of plane curves relative to a line.  In \cite{V} Vakil does the same for Hirzebruch surfaces.  We write here the notation and formula Vakil gives.

First, he defines the generating function
$$G = \sum_{D,g,\alpha,\beta} N^{D,g}(\alpha,\beta) v^D w^{g-1} \left(\frac{x^\alpha}{\alpha !} \right) y^\beta \left(\frac{z^\Gamma}{\Gamma !} \right)$$
where $N^{D,g}(\alpha, \beta)$ is the Severi degree of genus $g$ curves in the class $D$ on a rational surface $X$ with relative condition $\alpha[\uuu] +\beta[\ppp]$ along the divisor $E$.

The function $G$ satisfies the differential equation 
\begin{equation} \label{VakilGetzler}
\frac{ \partial G}{\partial z} = \left( \sum i y_i \frac{\partial}{\partial x_i} + \frac{v^E}{w} \hspace {.1in} exp\left(\sum i w t^i \frac{\partial}{\partial y_i}+ t^{-i}x_i  \right) \bigg|_{z^{-k}} \right) G
\end{equation}

Due to a difference in conventions labeling the relative conditions, 
$$
N_{g,(d_1,d_2)}^{\bullet}(\mu_E, \nu_E) = \frac{N^{D,g}(\alpha,\beta)}{\alpha!}.
$$

With that taken into account, the differential equation (\ref{VakilGetzler}) matches (\ref{CoopDiffE}) once the variables are matched (e.g. $z \rightarrow t$, $\Gamma \rightarrow n$, etc.).

\subsection{Abramovich-Bertram} \label{AbramBert}

Let $N^g_{F_k} (aC + bF)$ be the number of irreducible genus $g$ curves in class $aS + bF$, $a,b \geq 0$, through the appropriate number of points.

The following relation was proved in genus 0 by Abramovich-Bertram and Graber \cite{AB}, \cite{Gr} and later extended to all genera by Vakil in \cite{V}.

\begin{Theorem*}[Vakil, \cite{V}] \label{ABV}
For all $g, a, b > 0$, 
$$N^g_{F_0} (aC + (a + b)F) = \sum_{i=0}^{a-1} {{b + 2i} \choose{i}} N^g_{F_2} (aC + bF-iE).$$
\end{Theorem*}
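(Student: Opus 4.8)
The plan is to combine deformation invariance of Gromov--Witten theory with the two ways of computing $F_2$ invariants provided by the caps $w_0$ and $w_2$ of Proposition \ref{ws}. Since $F_0$ and $F_2$ are deformation equivalent, their Gromov--Witten invariants coincide under the class matching already used in Lemma \ref{CapInequality} with $m=1$: on $F_2$ the class $aC+bF$ equals $aE+(2a+b)F$ and is matched with $aC+(a+b)F$ on $F_0$. Through Theorem \ref{HirzebruchGW} this reads $\langle v\,|\,\exp(t\mathsf{M}_{F_2})\,|\,w_2\rangle=\langle v\,|\,\exp(t\mathsf{M}_{F_0})\,|\,w_0\rangle$ after the substitution $Q_2\mapsto Q_1Q_2$ on the $F_0$ side, and since the Gromov--Witten invariants of $F_0$ are its classical Severi degrees, the left-hand numbers $N^g_{F_0}$ are encoded in $\langle v\,|\,\exp(t\mathsf{M}_{F_2})\,|\,w_2\rangle$. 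The theorem thus reduces to comparing this with the enumerative $F_2$ series, which by Corollary \ref{logHirzebruchGW} and the Block--Goettsche identification is the same expression with $w_2$ replaced by the transverse cap $w_0$; that is, to the comparison of $w_2$ and $w_0$.

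The geometric content of this comparison is that the $F_2$ Gromov--Witten invariant in class $aC+bF$ overcounts the enumerative number $N^g_{F_2}(aC+bF)$ by contributions from curves whose image contains the rigid $(-2)$--curve $E$. Separating off an $E$--part of total class $iE$ leaves a residual curve of geometric genus $g$ in class $aC+bF-iE$ carrying all $n=4a+2b+g-1$ point conditions (the point number, computed from $n=g-1+2d_1+(2-k)d_2$ with $k=2$, is independent of $i$), which is exactly what $N^g_{F_2}(aC+bF-iE)$ enumerates. The binomial coefficient then arises because the residual meets $E$ in $(aC+bF-iE)\cdot E=b+2i$ points, at $i$ of which the $E$--part is attached, giving $\binom{b+2i}{i}$ configurations per residual curve. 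The simplicity of the cap $w_2=\exp\bigl(Q_1\alpha_{-1}[\uuu]+Q_1Q_2\,\alpha_{-1}[\uuu]\bigr)v_\emptyset$, in which only $\alpha_{-1}[\uuu]$ appears---in contrast to the $\alpha_{-2}[\uuu]$ and Bryan--Pandharipande $\alpha_{-a}[\ppp]$ terms of $w_3$---reflects that on $F_2$ these exceptional contributions are elementary, so that each enters with multiplicity one and no multiple-cover factors survive.

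The crux, and the step I expect to be the main obstacle, is to make this comparison precise at the level of the operators. One must resum the difference between the $w_2$-- and $w_0$--series through $\exp(t\mathsf{M}_{F_2})$, showing that the extra $Q_2$--graded content of $w_2$ shifts the $E$--degree by $i$ and multiplies by exactly $\binom{b+2i}{i}$. This requires both the local input that the relevant exceptional curves contribute $\int 1=1$---the delicate computation at the $(-2)$--curve $E$ (the feature that singles out $F_2$), already carried out in Proposition \ref{ws} where the integrals reduce to $\int 1=1$ after transport to $F_0$---and the combinatorial reconciliation of the factors $\mathfrak{z}$, $m(\eta)$ and the labeled/unlabeled conventions for the relative markings, precisely the bookkeeping deferred to the reader in the proof of Theorem \ref{HirzebruchGW}.

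Finally I would fix the range of summation and the passage to irreducible counts. The sum runs over $0\le i\le a-1$ because the residual class $aC+bF-iE=(a-i)E+(2a+b)F$ must have positive $E$--coefficient $a-i\ge1$ to be represented by an irreducible multisection through the points; when $i=a$ it becomes the fiber class $(2a+b)F$ and contributes nothing. Because the attached $E$--part carries no point conditions and the residual is the unique irreducible component meeting all of them, the decomposition is already a statement about irreducible curves and proves the identity for the $N^g$ directly; at the level of partition functions the same relation is recovered after taking logarithms to pass from the disconnected series to connected counts.
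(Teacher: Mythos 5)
Your route is the same as the paper's: prove the disconnected identity (Corollary \ref{ABVii}) by running the same evolution $\exp(t\mathsf{M}_{F_2})$ against the two caps $w_2$ and $w_0$, identify the $w_0$ side with enumerative counts via Corollary \ref{logHirzebruchGW} and Block--Goettsche, then use deformation equivalence $N^\bullet_{F_2,g,(d_1,d_2)}=N^\bullet_{F_0,g,(d_1+d_2,d_2)}$ and a disconnected-to-connected adjustment. The gap is that the step you explicitly defer as ``the main obstacle'' --- extracting $\binom{b+2i}{i}$ from the comparison of caps --- is the entire content of the paper's proof, and your proposed substitutes for it do not work as stated. There is no operator resummation to do and no reconciliation of $\mathfrak{z}$, $m(\eta)$, or labeling conventions needed: those factors live inside $\exp(t\mathsf{M}_{F_2})$, which is literally identical on both sides and cancels term by term, so the whole comparison happens in the cap. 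Since $w_2=\exp\bigl(Q_1(1+Q_2)\,\alpha_{-1}[\uuu]\bigr)v_\emptyset$ involves only $\alpha_{-1}[\uuu]$, the matrix element depends only on the total power of $\alpha_{-1}[\uuu]$ together with the $Q$-grading, and the binomial coefficient falls out of the one-line identity $\frac{1}{i!\,(d+i)!}=\binom{d+2i}{i}\frac{1}{(d+2i)!}$ (equivalently, expanding $(1+Q_2)^{d+2i}$), with $d=d_1-2d_2$. Your alternative geometric derivation --- choosing the $i$ attachment points of the $E$-part among the $b+2i$ intersections of the residual curve with $E$ --- is a heuristic, not a proof; making that multiplicity rigorous is exactly the hard deformation-theoretic content of the original Abramovich--Bertram/Vakil argument, which the Fock space formalism is designed to bypass.

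Two smaller corrections to your endgame. First, the claim that the decomposition ``is already a statement about irreducible curves and proves the identity for the $N^g$ directly'' is wrong in this framework: the matrix elements compute disconnected invariants $N^\bullet$, so what one proves first is the disconnected identity, whose sum runs over $0\le i\le d_2$ --- one term more than Vakil's statement --- and the paper then passes to the connected theorem by noting precisely that this extra term drops for connected counts (your observation that the $i=a$ residual class $(2a+b)F$ supports no irreducible curve through the points is the right reason, but it applies at that final step, not to the Fock space identity itself). Second, your worry that the local integrals at the $(-2)$-curve are ``delicate'' is misplaced here: they were already packaged into $w_2$ in Proposition \ref{ws}, so nothing beyond the coefficient bookkeeping above remains. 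Your observation that $n=g-1+2d_1$ is independent of $d_2$ for $k=2$ is correct and is indeed what makes the comparison term-by-term in $n$.
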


This Theorem can be recovered by comparing the formula for Gromov--Witten invariants on $F_2$ given by Theorem \ref{HirzebruchGW} to the formula for transverse Gromov--Witten invariants on $F_2$ given by Corollary \ref{logHirzebruchGW}.

\begin{Corollary} \label{ABVii}
For all $g, d_1,d_2$ satisfying $d = d_1-2d_2 \geq 0$, we have the following relationship between the disconnected Gromov--Witten and transverse Gromov--Witten invariants of $F_2$. 
$$N^\bullet_{F_2, g, (d_1, d_2)} = \sum_{i=0}^{d_2} {{d + 2i} \choose{i}} \hat{N}^\bullet_{F_2, g, (d_1, d_2-i)}.$$
\end{Corollary}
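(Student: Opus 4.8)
The plan is to prove the identity by comparing the two Fock-space formulas that are already established, since they differ only in a single vector. By Theorem \ref{HirzebruchGW} the generating function $\mathsf Z^{F_2}$ equals $\langle v\mid \exp(t\mathsf M_{F_2})\mid w_2\rangle$, while by Corollary \ref{logHirzebruchGW} the transverse generating function $\hat{\mathsf Z}^{F_2}$ equals $\langle v\mid \exp(t\mathsf M_{F_2})\mid w_0\rangle$. Both share the same bra $\langle v\mid$ and the same propagator $\exp(t\mathsf M_{F_2})$, so I would regard $\Phi(w):=\langle v\mid \exp(t\mathsf M_{F_2})\mid w\rangle$ as a fixed linear functional and read the corollary as the single statement $\mathsf Z^{F_2}=\Phi(w_2)$ versus $\hat{\mathsf Z}^{F_2}=\Phi(w_0)$, then extract the coefficient of $u^{g-1}Q_1^{d_1}Q_2^{d_2}t^{n}/n!$ from each. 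A feature special to $F_2$ makes this clean: since $k=2$, the exponent $n=g-1+2d_1+(2-k)d_2=g-1+2d_1$ does not depend on $d_2$, so for fixed $g,d_1$ the factor $t^{n}/n!$ is the same in every term of the $Q_2$-expansion and the whole comparison happens at one fixed power of $t$.

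Second, I would recast the target as a generating-function substitution, which isolates its combinatorial content. Reindexing by $d_2'=d_2-i$ turns the claim into $N^{\bullet}_{F_2,g,(d_1,d_2)}=\sum_{d_2'}\binom{d_1-2d_2'}{d_2-d_2'}\hat N^{\bullet}_{F_2,g,(d_1,d_2')}$, and summing against $Q_2^{d_2}$ at fixed $d_1$ collapses the binomials to
$$\sum_{d_2}N^{\bullet}_{F_2,g,(d_1,d_2)}Q_2^{d_2}=(1+Q_2)^{d_1}\sum_{d_2'}\hat N^{\bullet}_{F_2,g,(d_1,d_2')}\Bigl(\tfrac{Q_2}{(1+Q_2)^2}\Bigr)^{d_2'}.$$
Thus the corollary is equivalent to saying that the $Q_1^{d_1}$-part of $\mathsf Z^{F_2}$ is obtained from that of $\hat{\mathsf Z}^{F_2}$ by the substitution $Q_2\mapsto Q_2/(1+Q_2)^2$ followed by multiplication by $(1+Q_2)^{d_1}$. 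This is exactly the shape the explicit vectors predict: by Proposition \ref{ws}, $w_2=\exp\bigl(Q_1(1+Q_2)\,\alpha_{-1}[\uuu]\bigr)v_\emptyset$, so $d_1$ insertions of its generator carry the overall factor $(1+Q_2)^{d_1}$, whereas $w_0=Q_1\exp(\alpha_{-1}[\ppp])v_\emptyset$ (recalled from \cite{CP}) carries no $Q_2$ at all. The only other source of $Q_2$ is the second summand of $\mathsf M_{F_2}$, weighted by $Q_1^2Q_2$; each of its applications inserts one net unit of the $E$-class together with two units of the $F$-class, and it is this $Q_1^2Q_2$ weight that I expect to produce the reciprocal factor $(1+Q_2)^{-2}$ in the substitution.

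Third, I would carry out the comparison term by term. Expanding $w_2=\sum_m\sum_i\binom{m}{i}Q_1^mQ_2^i\,|(1^m),\emptyset\rangle$ by the binomial theorem exhibits the $\binom{m}{i}$ directly, and I would match these, after the propagator has acted and the accumulated powers of $Q_1$ and $Q_2$ have been collected, against the $Q_2$-expansion of $\Phi(w_0)$ under the class-shift $(d_1,d_2)\mapsto(d_1,d_2-i)$. The bookkeeping is of the same type left to the reader at the end of the proof of Theorem \ref{HirzebruchGW}, and the $F_2$-specific collapse of the $t$-grading from the first step guarantees that no $t$-powers interfere.

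The hard part will be the sector mismatch between the two vectors. The vector $w_2$ lives in the $[\uuu]$-sector of $\cF[\proj^1]$ whereas $w_0$ lives in the $[\ppp]$-sector, and because of the swap $\delta_{\mu\nu'}\delta_{\nu\mu'}$ in the inner product the bra $\langle v\mid$ pairs with $[\ppp]$-quanta; hence $\Phi(w_2)$ and $\Phi(w_0)$ cannot be compared by any termwise substitution of variables. To relate them one must use that the first summand $\sum_{i>0}\alpha_{-i}[\ppp]\alpha_i[\ppp]$ of $\mathsf M_{F_2}$ intertwines the two sectors, converting a $[\uuu]$-quantum of part-size $i$ into a $[\ppp]$-quantum of the same size with a factor of $i$, and then verify that the powers of $Q_1$ and $Q_2$ accumulated along the way reproduce $\binom{d+2i}{i}$ with $d=d_1-2d_2$ on the nose. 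An alternative that reorganizes, but does not remove, this difficulty is to pass through the deformation equivalence $F_2\sim F_0$ already used in Lemma \ref{CapInequality}, rewriting the $F_2$-caps as $F_0$-caps relative to $C+F$ and reading the binomial coefficient as the number of ways the $(-2)$-curve is absorbed; the multiplicity bookkeeping is then the same computation in a different guise.
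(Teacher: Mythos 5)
Your first three steps are, in substance, the paper's own proof. The paper extracts the coefficient of $u^{g-1}Q_1^{d_1}Q_2^{d_2}$ from Theorem \ref{HirzebruchGW} with the explicit cap $w_2=\exp\bigl(Q_1(1+Q_2)\,\alpha_{-1}[\uuu]\bigr)v_\emptyset$, uses (as you do) that for $k=2$ the exponent $n=g-1+2d_1$ is independent of $d_2$ so the whole comparison happens at a single power of $t$, and reads the binomial off the cap expansion via
\begin{equation*}
\frac{(Q_2\,\alpha_{-1}[\uuu])^i}{i!}\cdot\frac{\alpha_{-1}[\uuu]^{\,d+i}}{(d+i)!}
=\binom{d+2i}{i}\,Q_2^i\,\frac{\alpha_{-1}[\uuu]^{\,d+2i}}{(d+2i)!}\,,
\end{equation*}
the remaining $d_2-i$ powers of $Q_2$ coming from the propagator as $(Q_1^2Q_2)^{d_2-i}$; the matrix element with the pure cap $Q_1^{d+2i}\alpha_{-1}[\uuu]^{d+2i}/(d+2i)!$ is then exactly the matrix element that Corollary \ref{logHirzebruchGW} assigns to $\hat{N}^{\bullet}_{g,(d_1,d_2-i)}$. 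Your generating-function repackaging (substitute $Q_2\mapsto Q_2/(1+Q_2)^2$ and multiply by $(1+Q_2)^{d_1}$) is a correct equivalent formulation of the identity, though the paper simply matches coefficients term by term.

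The flaw is your final paragraph, where you declare a ``hard part'' that does not exist and propose machinery that would derail the argument. The sector mismatch you describe is an artifact of inconsistent $[\uuu]$/$[\ppp]$ labeling in the paper's statements, not a mathematical obstacle: the identical cap numerics $h=0$, $s=1$, $t=0$, $\ell(\sigma)=0$ are recorded as $Q_1\alpha_{-1}[\ppp]$ in the $F_1$ computation but as $Q_1\alpha_{-1}[\uuu]$ in the $F_2$ computation of Proposition \ref{ws}, and once the conventions are fixed consistently both caps are pure $(1^m)$-type vectors in the \emph{same} sector. Since the bra $\langle v|$ and the propagator $\exp(t\MM_{F_2})$ are literally identical in the two formulas, the two partition functions differ only in the cap, and the corollary is immediate from termwise proportionality of caps at fixed $(Q_1,Q_2)$-degree; no intertwining analysis is needed. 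Indeed, your suggestion to extract $\binom{d+2i}{i}$ from tracking $\sum_{i>0}\alpha_{-i}[\ppp]\alpha_i[\ppp]$ through the propagator is a wrong turn --- that action is the same in both matrix elements and cancels in the comparison, while the binomial comes entirely from the cap expansion you already wrote in your third step. The detour through the deformation equivalence $F_2\sim F_0$ is likewise unnecessary. Delete the last paragraph, fix the labels, and your steps one through three \emph{are} the paper's proof.
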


\begin{proof}
We examine the formula for Gromov--Witten invariants of $F_2$.  By Theorem \ref{HirzebruchGW},

$$\mathsf{Z^{F_2}} (u, Q_1, Q_2,  t) = \big\langle\ v \ | \  \exp\big(t\mathsf{M}_{F_2}(u,Q_1, Q_2)\big) \ | \ w_2\ \big\rangle$$
where $w_2 = exp(Q_1 \alpha_{-1}[\uuu] + Q_1 Q_2 \alpha_{-1}[\uuu])$.

By Theorem \ref{HirzebruchGW}, we can compute $N^\bullet_{F_2, g, (d_1, d_2)}$ as 

\begin{equation}\label{F2term}
N^\bullet_{F_2, g, (d_1, d_2)} u^{g-1} Q_1^{d_1} Q_2^{d_2} \frac{t^n}{n!} = \langle \frac{\alpha_{-1}^{d_1}[\uuu]}{d_1!} v_\emptyset | \frac{(t M_{F_2})^n}{n!} | \sum_{i=0}^{d_2} \frac{(Q_2 \alpha_{-1} [\uuu])^i}{i!}\cdot \frac{\alpha_{-1}[\uuu] ^{d+i}}{(d+i)!} v_\emptyset \rangle.
\end{equation}

The following numbers are equal:
$$
 \langle \frac{\alpha_{-1}^{d_1}[\uuu]}{d_1!} v_\emptyset | \frac{( M_{F_2})^{n}}{n!} |\frac{( \alpha_{-1} [\uuu])^i}{i!}\cdot \frac{\alpha_{-1}[\uuu] ^{d+i}}{(d+i)!} v_\emptyset \rangle
 = {{d+2i}\choose{i}} \langle \frac{\alpha_{-1}^{d_1}[\uuu]}{d_1!} v_\emptyset | \frac{( M_{F_2})^{n}}{n!} |\frac{\alpha_{-1}[\uuu] ^{d+2i}}{(d+2i)!} v_\emptyset \rangle.
$$
The corollary follows.

\end{proof}

$F_0$ and $F_2$ are deformation equivalent, so $N^\bullet_{F_2, g, (d_1, d_2)} = N^\bullet_{F_0, g, (d_1+d_2, d_2)}$.  As discussed earlier, the transverse Gromov--Witten invariants of $F_2$ are enumerative.  So Corollary \ref{ABVii} recovers Theorem \ref{ABV}, after noting that the summand in Corollary \ref{ABVii} has one more term, because the Corollary concerns disconnected curves while the Theorem is for connected curves.

In \cite{BM} Brugalle and Markwig study the relationship between the enumerative curve counts on $F_k$ and $F_{k+2}$, and in particular also recover the disconnected form of Abramovich and Bertram's relationship between the counts on $F_0$ and $F_2$.

\vspace{+8 pt}
\noindent
Department of Mathematics\\
Harvard University\\
yaim@math.harvard.edu

\end{document}